\newcommand\cA{{\mathcal A}}
\newcommand\cB{{\mathcal B}}
\newcommand\cD{{\mathcal D}}
\newcommand\cF{{\mathcal F}}
\newcommand\cL{{\mathcal L}}
\newcommand\cU{{\mathcal U}}
\newcommand\BN{{\mathbb{N}}}
\newcommand\BQ{{\mathbb{Q}}}
\newcommand\BR{{\mathbb{R}}}
\newcommand\BZ{{\mathbb{Z}}}
\newcommand\ba{{\bold{a}}}
\newcommand\bd{{\bold{d}}}
\newcommand\be{{\bold{e}}}
\newcommand\br{{\bold{r}}}
\newcommand\bx{{\bold{x}}}
\newcommand\by{{\bold{y}}}
\newtheorem{thm}{Theorem}
\newtheorem{cor}[thm]{Corollary}
\newtheorem{lemma}[thm]{Lemma}
\newtheorem{definition}[thm]{Definition}
\newtheorem{prop}[thm]{Proposition}
\theoremstyle{remark}
\newtheorem{example}{Example}
\title{Parry's topological transitivity and $f$-expansions}
\date{May 18, 2014}
\author{E. Arthur Robinson, Jr.}
\address{Department of Mathematics\\ George Washington University\\ 2115 G St. NW\\ 
Washington, DC 20052}
\email{robinson@gwu.edu}
\subjclass[2010]{37E05, 37B20, 11K55}
\begin{document}

\begin{abstract}
In his 1964 paper \cite{parry2} on $f$-expansions, Parry studied piecewise-continuous, 
piecewise-monotonic  maps $F$ of the interval $[0,1)$,  
and introduced a notion of topological transitivity different from any of the modern definitions. 
This notion, which we call {\em Parry topological transitivity}, 
(PTT) is that the {\em backward orbit} 
$O^-(x)=\{y:x=F^ny\text{\ for\ some\ }n\ge 0\}$ of some $x\in[0,1)$ is dense. 
We take {\em topological transitivity} (TT) to mean that some $x$ has a dense {\em forward} 
orbit.
Parry's application to $f$-expansions is that PTT implies the partition of $[0,1)$ into the ``fibers'' 
of $F$ is a generating partition (i.e., $f$-expansions are ``valid''). We prove the same result for 
TT, and use this to show that
for interval maps $F$,   
TT implies PTT. A separate proof 
is provided for continuous maps $F$ of compact metric spaces. The converse is false.
\end{abstract}

\maketitle

\section{Introduction} 

The concept of {\em topological transitivity} plays an important role in dynamical systems 
theory. 
Let  $F:X\to X$ be a surjective map on a  topological space $X$.
The definition of topological transitivity (TT) that  we will adopt in this paper
is that for some $x\in X$
the {\em forward orbit}
\(
O^+(x)=\{F^nx:n\ge 0\}
\) 
is dense in $X$. 
Another definition, sometimes called 
{\em regional topological transitivity} 
(RTT), is that for any two non-empty open sets $U,V\subseteq X$, there exists $n>0$ so that  
\(
U\cap F^n V\not=\emptyset
\), or 
equivalently (see \cite{akin}),
\(
F^{-n}U\cap V\not=\emptyset
 \).
The equivalence of TT and RTT for continuous maps  $F$ of perfect compact metric spaces 
$X$ is well known 
(see Proposition~\ref{equiv} below).
Several papers (see for example \cite{kolyada} or \cite{akin}) 
discuss these, and other, definitions of topological transitivity 
for continuous maps $F$, 
and give conditions under which various definitions are equivalent.

However, one often wants to apply the concept of 
topological transitivity in situations with
less ideal hypotheses.
One benefit of the definition TT 
is that it makes sense even  
when $F$ is not continuous. In this paper, we will 
mostly be interested piecewise monotonic, piecewise continuous maps $F$ on
the unit interval.
In 1964, Parry \cite{parry2} gave a different 
definition of topological transitivity in this situation,    
which we refer to here as {\em Parry topological transitivity} (PTT).
It says that for some $x\in X$ the {\em backward orbit} 
\[
O^-(x)=\{F^{-n}(x):n\ge 0\}=\{y:x=F^n(y) \text{ for some }
n\ge 0\}
\]
is dense. 

As we 
will see, PTT generally does not 
imply TT, but in many situations, TT does imply PTT.
It is not  hard  to obtain such results under ``nice'' hypotheses, like for 
subshifts (see Corollary~\ref{nine}) or for continuous maps  $F$ of 
perfect compact metric spaces (see Theorem~\ref{ttiptt}). 
In this case, we show that TT implies that $O^-(x)$ is dense for a dense $G_\delta$ set of 
$x\in X$.
Recently, it was shown in \cite{anima} that  
continuous TT maps $F:\BR\to\BR$ satisfy PTT (in fact \cite{anima} proves more: 
if such an $F$ is TT then $O^-(x)$ is dense for all but possibly two points $x\in\BR$).

Our main goal in this paper is 
to understand 
the situation in the case studied by Parry \cite{parry2}, namely, 
for piecewise-continuous, 
piecewise-monotonic maps $F$ of the interval.
We call a surjective map $F:[0,1)\rightarrow [0,1)$ a {\em piecewise interval map} 
if there is a partition of (almost all of) $[0,1)$ into a finite or countable 
$\xi$ of disjoint intervals, indexed by a ``digit set'' $\cD$,
such that each 
$F|_{\Delta(d)}$ is continuous and strictly monotonic see Section~\ref{pim} for details).
In his paper \cite{parry2}, Parry considered piecewise interval maps  in the context of $f$-
expansions, 
as defined by R\`enyi \cite{renyi}, Bissinger \cite{bissinger} and Everett \cite{everett} in the 
1940's and 1950's. 
Unknown to these authors, the same idea  
had actually been 
studied earlier in 1929 by Kakeya \cite{Kakeya}. 

The idea of $f$-expansions (the term is due to R\`enyi, \cite{renyi})
is to use piecewise 
interval maps  $F$ to obtain what we call
the $F$-representation $\br(x)\in
\cD^{\BN}$ of  $x\in[0,1)$
by recording the sequence $\br(x)=.d_1d_2d_3\dots$
of $\xi$-intervals visited by the $F$-iterates of $x$ (see Section~\ref{pim}).  
The goal is to find conditions 
on $F$ so that almost every $x$ has a unique $F$-representation
(``valid'' in Parry's terminology \cite{parry2}). One also studies
an algorithm (see Section~\ref{fexpg}) to recover $x$  
from $\br(x)$. 
In particular, under appropriate conditions the 
``$f$-expansion''
\(f(d_1+f(d_2+f(d_3+\dots)))\) converges to $x$, where
$f:\BR\to[0,1]$ is a function satisfying $F(x)=f^{-1}(x)\text{\ mod\ }1$.

There are, of course, two examples of $F$-representations and $f$-expansions that are 
especially well known. 
Binary representations/expansions of real numbers 
correspond to 
$F(x)=2x\text{ mod }1$. with 
$f(x)=x/2$,\footnote{Decimal representation and decimal expansions correspond to replacing 
the ``base'' 2 with base 10.}
Continued fraction representations
correspond to $F(x)=1/x\text{ mod }1$. 
In each of these cases, $F$ satisfies both TT and PTT.

In \cite{parry2} Parry proved  that PTT implies $F$-representations are valid (and 
also, with some additional hypotheses, that valid $F$-expansions implies 
PTT). In this paper, 
we prove a slightly strengthened version of Parry's first result, as well as a 
``modern'' version of Parry's result, 
which says that TT implies $F$-representations are valid.
One benefit is that TT is often very easy to verify. For example, 
(see Proposition~\ref{ergodic}), any 
$F$ that is ergodic for an invariant measure $\mu$  
equivalent to Lebesgue measure will satisfy TT. 
In the end, we show 
(Theorem~\ref{mainresult})
that TT implies PTT for piecewise interval maps $F$.

\section{Topological transitivity} \label{topt}

In this section we consider various notions of topological transitivity for continuous maps $F$. 
We begin with two standard results mentioned in the introduction, which we
prove for the sake of completeness. 

\begin{prop}\label{equiv}
Suppose $F: X\rightarrow X$ is a continuous map on a compact metric space. If $F$ satisfies RTT then  
the set
$X_0=\{x:O^+(x)\text{\ is\ dense\ in\ } X\}$ contains a dense $G_\delta$. In particular, RTT implies TT. If, in addition, $X$ is perfect, then TT implies RTT.
\end{prop}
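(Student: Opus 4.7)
The plan is a Baire-category argument for the first implication and a direct density argument for the second, both exploiting that a compact metric space is second countable and a Baire space, and that preimages of open sets under the continuous map $F$ are open.

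For the first implication, fix a countable base $\{U_n\}_{n\ge 1}$ of $X$ and set
\[
V_n=\bigcup_{k\ge 0}F^{-k}(U_n).
\]
Each $V_n$ is open by continuity of $F$, and a point $x$ lies in $V_n$ precisely when $O^+(x)\cap U_n\ne\emptyset$, so the set $X_0$ of points with dense forward orbit is exactly $\bigcap_n V_n$, a $G_\delta$. To see that each $V_n$ is dense, take any non-empty open $W\subseteq X$; RTT applied to the pair $U_n,W$ yields some $k\ge 0$ with $F^{-k}(U_n)\cap W\ne\emptyset$, i.e.\ $V_n\cap W\ne\emptyset$. The Baire category theorem then makes $X_0$ a dense $G_\delta$, and in particular non-empty, giving TT.

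For the second implication, assume $X$ is perfect and fix $x_0$ with $O^+(x_0)$ dense. Given non-empty open sets $U,V\subseteq X$, choose some $m\ge 0$ with $F^m(x_0)\in V$. Because $X$ is perfect, every non-empty open subset is infinite, and consequently a dense subset of $X$ remains dense after deletion of finitely many points. Thus the tail $\{F^{m+j}(x_0):j\ge 1\}$, which differs from $O^+(x_0)$ only in the finite set $\{x_0,Fx_0,\dots,F^m(x_0)\}$, is still dense; it therefore meets $U$, producing $k\ge 1$ with $F^{m+k}(x_0)\in U$. Then $F^m(x_0)\in V\cap F^{-k}(U)$, which is the RTT condition.

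The only real subtlety is producing $k\ge 1$ rather than merely $k\ge 0$ in the last step, since the paper's definition of RTT requires $n>0$; this is precisely where the perfect hypothesis is used, as it is needed to delete the initial finite segment of the orbit and still retain a dense set to catch $U$ at a strictly positive time.
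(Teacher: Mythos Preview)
Your proof is correct and follows essentially the same approach as the paper: a Baire-category argument with $\bigcap_n\bigcup_{k\ge 0}F^{-k}(U_n)$ for the first implication, and for the second, the observation that in a perfect space a dense orbit visits every open set infinitely often (the paper isolates this as a separate lemma, while you inline it as ``a dense set minus finitely many points is still dense''). The only cosmetic difference is that the paper hits $U$ first and then $V$ later, whereas you swap the roles; both yield the required $n>0$.
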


\begin{proof}
Assume $U \cap F^{-n} V\not=\emptyset$ for some $n\ge 0$. Then for any $V$ open, $\cup_{n\ge 0}F^{-n} V$ is dense open, since it meets any open set 
$U$. 
Thus,
whever $V_k$ is countable basis for $X$,
the Baire Category Theorem implies 
 $X_0=\bigcap_{k\ge 0}\cup_{n\ge 0}F^{-n} V_k$ is 
dense $\text{G}_\delta$.
Clearly $x\in X_0$ implies that for any $k$, there exists $n\ge 0$ so that $F^n(x)\in V_k$. Thus 
$O^+(x)$ is dense. 

Now suppose $O^+(x)$ is dense and let $U$ and $V$ be open.
There exist $n,m\ge 0$ with $F^nx\in U$ and $F^mx\in V$.  
Since $X$ 
is perfect, Lemma~\ref{infinite} (below) shows
we may assume $m>n$. 
It follows that $F^n x\in U\cap F^{-m+n}V\not=\emptyset$.  
\end{proof}

\begin{lemma}\label{infinite}
Let $X$ be a perfect (no isolated points) metric space (not necessarily compact), and suppose $F:X\to X$ is continuous. If $O^+(x)$ is dense, and $V\subseteq X$ is open, then $\{k\in\BN_0:F^k(x)\in V\}$ is infinite. 
\end{lemma}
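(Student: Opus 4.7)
\medskip

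The plan is to argue by contradiction. Suppose the set $S = \{k \in \BN_0 : F^k(x) \in V\}$ is finite, say $S = \{k_1 < k_2 < \dots < k_m\}$ (it is nonempty because density of $O^+(x)$ guarantees at least one hit of the open set $V$). I will produce a smaller open subset $V' \subseteq V$ that $O^+(x)$ must meet (by density) but cannot meet, giving the contradiction.

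First, I would show that $V$ contains infinitely many points. Since $X$ is perfect, no point of $X$ is isolated; if $p \in V$, then every open ball around $p$ meets $X \setminus \{p\}$, and by shrinking radii we also stay inside $V$, so $p$ is not isolated in $V$ either. A nonempty Hausdorff space with no isolated point is infinite, so $|V| = \infty$.

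Next, since the finite set $\{F^{k_1}(x), \dots, F^{k_m}(x)\}$ has only $m$ points, I can pick $y \in V \setminus \{F^{k_1}(x), \dots, F^{k_m}(x)\}$. Using the Hausdorff property of the metric (in fact $T_1$ suffices), I choose $\varepsilon > 0$ small enough that the open ball $V' := B(y,\varepsilon)$ is contained in $V$ and disjoint from $\{F^{k_1}(x), \dots, F^{k_m}(x)\}$.

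Finally, density of $O^+(x)$ forces some $F^j(x) \in V'$. But $V' \subseteq V$ means $j \in S$, so $F^j(x) = F^{k_i}(x)$ for some $i$, contradicting $V' \cap \{F^{k_1}(x), \dots, F^{k_m}(x)\} = \emptyset$. The only nonroutine step is the observation that open subsets of a perfect metric space are themselves perfect (hence infinite); the rest is a straightforward separation argument. Note that continuity of $F$ is actually not used — only density of $O^+(x)$ and the perfect/metric structure of $X$ — which matches the spirit of the lemma as a purely topological lemma about dense sequences in perfect spaces.
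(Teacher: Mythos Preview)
Your proof is correct and follows essentially the same idea as the paper's: both arguments observe that removing finitely many orbit points from the open set $V$ leaves a nonempty open set (since $X$ is perfect and metric), which the dense forward orbit must still visit. The paper phrases this directly by setting $V_n = V \setminus \{x, F(x), \dots, F^{n-1}(x)\}$ and concluding that for every $n$ there is a hit time $k \ge n$, whereas you phrase it as a contradiction, but the substance is the same (and, as you note, continuity of $F$ is not actually needed in either argument).
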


\begin{proof}
Let $V_n=V\backslash\{F^k(x): x=0,1,\dots,n-1\}$. Since $X$ is perfect  metric, $V_n$ is nonempty and open, so $O^+(x)\cap V_n\not=\emptyset$. 
It follows that for any $n$ there is $k\ge n$ so that $F^k(x)\in V$.  
\end{proof}

\begin{prop}\label{invertible}
If $F:X\rightarrow X$ is a homeomorphism of a perfect compact metric space, then TT is equivalent 
to TTT.
In fact, $F$ and $F^{-1}$ both  satisfy TT, and 
there exists $X_0\subseteq X$, dense $G_\delta$ so that $O^-(x)$ and $O^+(x)$ are both dense for $x\in X_0$.
\end{prop}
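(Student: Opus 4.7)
The plan is to leverage Proposition~\ref{equiv} twice, once for $F$ and once for $F^{-1}$, after noting a basic symmetry. The key preliminary observation is that the definition of RTT is manifestly symmetric in $F$ and $F^{-1}$: because $F$ is a homeomorphism, $U\cap F^{-n}V\neq\emptyset$ is equivalent to $F^nU\cap V\neq\emptyset$, i.e.\ to $V\cap (F^{-1})^{-n}U\neq\emptyset$. Thus $F$ satisfies RTT if and only if $F^{-1}$ does.

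First I would show that TT for $F$ and PTT for $F$ are each equivalent to RTT for $F$. Since $X$ is a perfect compact metric space, Proposition~\ref{equiv} gives TT\,$\Leftrightarrow$\,RTT for $F$ immediately. For the PTT direction, one notes that because $F$ is invertible, $O^-(x)=\{F^{-n}x:n\ge 0\}$ is precisely the forward orbit of $x$ under the homeomorphism $F^{-1}$. Hence PTT for $F$ is the same as TT for $F^{-1}$; applying Proposition~\ref{equiv} to $F^{-1}$ turns this into RTT for $F^{-1}$, which by the symmetry observation is RTT for $F$. The converse reverses this chain, establishing the equivalence of TT and PTT.

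To produce the dense $G_\delta$ set on which both orbits are dense, I would apply the first (Baire-category) half of Proposition~\ref{equiv} twice starting from RTT: once to $F$, yielding a dense $G_\delta$ set $X_0^+$ of points with dense $F$-forward orbit, and once to $F^{-1}$, yielding a dense $G_\delta$ set $X_0^-$ of points with dense $F^{-1}$-forward orbit, i.e.\ dense $F$-backward orbit $O^-(x)$. Setting $X_0=X_0^+\cap X_0^-$, the Baire Category Theorem again gives a dense $G_\delta$, and every $x\in X_0$ simultaneously has $O^+(x)$ and $O^-(x)$ dense.

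There is no serious obstacle here; the proof is really a bookkeeping exercise. The only point that deserves care is tracking which orbit belongs to which map: one must consistently use the identification $O^-_F(x)=O^+_{F^{-1}}(x)$, which is exactly where the homeomorphism hypothesis (as opposed to mere surjectivity or continuity) is used.
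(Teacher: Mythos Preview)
Your argument for the equivalence of TT with PTT via the symmetry of RTT, and your construction of the dense $G_\delta$ set $X_0=X_0^+\cap X_0^-$, are correct and in fact cleaner than the paper's treatment of those points. However, you have silently replaced the proposition's first claim by a different one: the statement asserts that TT is equivalent to \emph{TTT} (existence of a point whose full two-sided orbit $O(x)=\{F^nx:n\in\BZ\}$ is dense), not to PTT. These are three distinct notions, and your proof never mentions $O(x)$.

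The missing direction is TTT $\Rightarrow$ TT (the reverse is trivial since $O^+(x)\subseteq O(x)$). The paper handles it by observing that if $O(x)$ is dense then one of $O^+(x)$, $O^-(x)$ must be dense, and then using the RTT symmetry you already identified to pass from TT for $F^{-1}$ back to TT for $F$. Your framework accommodates this easily: given open $U,V$ and a dense $O(x)$, perfectness gives infinitely many $n\in\BZ$ with $F^nx\in V$ (the two-sided analogue of Lemma~\ref{infinite}), so one can choose $m<n$ with $F^mx\in U$, $F^nx\in V$, yielding $U\cap F^{-(n-m)}V\neq\emptyset$ and hence RTT. Once you add this short paragraph, your proof is complete and arguably more systematic than the paper's, which leaves the step ``$O(x)$ dense implies $O^+(x)$ or $O^-(x)$ dense'' unjustified.
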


\begin{proof}
Clearly, if $O^+(x)$ is dense then $O(x)$ is dense for all $x$ in a dense $G_\delta$. 
On the other hand, if $O(x)$ is dense then either 
$O^+(x)$ is dense (for all $x$ in a dense $G_\delta$), and 
thus $F$ is TT, or $O^-(x)$ is dense (for all $x$ in a dense $G_\delta$) and $F^{-1}$ is TT. 
In the latter case, $U \cap F^{-n} V\not=\emptyset$ for some $n\ge 0$, which 
shows $F$ is also TT. 

\end{proof}

\subsection{The relation between TT and PTT}

We begin with an example that shows  Parry topological transitivity (PTT) does not imply 
topological transitivity (TT).

\begin{example}\label{first}
Define a surjective map on a compact 
metric space by $F:[0,1]\to[0,1]$ by\footnote{My thanks to Ethan Akin for suggesting this 
example.}  
\[
F(x)=
\begin{cases}
-2x+1/2&\text{ if }x\in[0,1/4),\\
\phantom{-}2x-1/2&\text{ if }x\in[1/4,3/4),\text{\ and}\\
-2x+5/2&\text{ if }x\in[3/4,1].
\end{cases}
\]
Note that $O^-(1/2)$ is dense, whereas 
$F^n(1/8,3/8)\cap(5/8,7/8)=\emptyset$ for all $n\ge 0$. 
Note also that just a single point has $O^{-}(x)$, and not a dense $G_\delta$ set of points. 

\end{example}

In the other direction, we have the following:

\begin{thm}\label{ttiptt}
Suppose $F$ 
is a continuous map on a perfect compact metric space $X$ that satisfies 
TT. Then $F$ satisfies PTT, and moreover, the set 
$X_0=\{x:O^-(x)\text{\ is\ dense}\}$
contains a dense $G_\delta$.
\end{thm}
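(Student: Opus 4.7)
The plan is to use Proposition~\ref{equiv} to replace TT by RTT, recast the density of $O^-(x)$ in terms of a countable basis, and then reduce the problem to the homeomorphism case of Proposition~\ref{invertible} by passing to the natural extension (inverse limit) of $F$.

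Fix a countable basis $\{V_k\}$ for $X$. Then $O^-(x)$ is dense iff $x\in A_k:=\bigcup_{n\ge 0}F^n(V_k)$ for every $k$, so $X_0=\bigcap_k A_k$. RTT immediately gives each $A_k$ dense: for any open $U$, some $y\in V_k$ satisfies $F^n(y)\in U$, hence $F^n(y)\in U\cap A_k$. The obstacle is that $A_k$ is a countable union of continuous images of open sets—so $F_\sigma$ but not generally open—so Baire cannot be applied directly in $X$.

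To circumvent this, pass to the natural extension
\[
\hat X=\{(x_0,x_1,\dots)\in X^{\BN_0}:F(x_{n+1})=x_n\text{ for all }n\}
\]
with the shift homeomorphism $\sigma(x_0,x_1,\dots)=(F(x_0),x_0,x_1,\dots)$ and projections $\pi_n(\hat x)=x_n$. One checks that $\hat X$ is perfect compact metric (perfectness follows from $X$ perfect and $F$ surjective, since every basic cylinder neighborhood of a point of $\hat X$ contains uncountably many distinct points) and that $\sigma$ is TT on $\hat X$ (RTT of $\sigma$ on basic cylinders $\pi_{K_i}^{-1}(W_i)$ reduces by direct calculation to the condition $W_1\cap F^{-m}(W_2)\ne\emptyset$ for $m=n+K_2-K_1$, which is RTT for $F$). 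The crucial gain is that, since $\sigma$ is a homeomorphism, $\hat A_k:=\bigcup_n\pi_n^{-1}(V_k)=\bigcup_n\sigma^n(\pi_0^{-1}(V_k))$ is \emph{open}, and by RTT of $\sigma$ it is dense. Baire's theorem on $\hat X$ then gives $\hat G:=\bigcap_k\hat A_k$ a dense $G_\delta$ in $\hat X$. For $\hat x\in\hat G$ and each $k$, some coordinate $x_n$ lies in $V_k$; since $F^n(x_n)=x_0$, we have $x_n\in V_k\cap O^-(x_0)$, so $O^-(x_0)$ is dense and $x_0\in X_0$. Thus $\pi_0(\hat G)\subseteq X_0$, and being the continuous image of a dense set under the surjection $\pi_0$, $\pi_0(\hat G)$ is dense in $X$. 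This already proves PTT.

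The main obstacle is upgrading ``$X_0\supseteq\pi_0(\hat G)$ dense'' to ``$X_0$ contains a dense $G_\delta$ in $X$.'' Since $\pi_0$ is continuous and closed but not generally open, $\pi_0(\hat G)$ need not itself be $G_\delta$. My approach would be a Kuratowski--Ulam-type fibration argument: viewing $\hat X\subseteq X\times X^{\BN_{\ge 1}}$ as a closed subspace of a Polish product, the meager set $\hat X\setminus\hat G$ has, for comeager $x\in X$, meager slice over $x$; in particular, $\pi_0^{-1}(x)\cap\hat G\ne\emptyset$ for comeager $x$, and the set of such $x$ is precisely $\pi_0(\hat G)\subseteq X_0$. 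Thus $X_0$ is comeager in $X$, and since $X$ is a Baire space, $X_0$ contains a dense $G_\delta$. Making this fibered Kuratowski--Ulam transfer precise (verifying the slicing conditions for the inverse-limit subspace) is the step I expect to require the most care.
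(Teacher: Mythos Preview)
Your approach via the natural extension $\hat X$ is precisely the paper's: both show the lifted homeomorphism is TT, obtain a dense $G_\delta$ set $\hat G\subseteq\hat X$ of points whose backward $\sigma$-orbit (equivalently, coordinate sequence $\{x_n\}$) is dense, and observe that $\pi_0(\hat G)\subseteq X_0$ is dense in $X$, yielding PTT. Your $\hat G=\bigcap_k\bigcup_n\pi_n^{-1}(V_k)$ is exactly the paper's $\widetilde X_0$.

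Where you diverge is the final ``contains a dense $G_\delta$'' step, and here you have correctly located a soft spot in the paper. You are right that $\pi_0$ need not be open: basic open sets in $\hat X$ have the form $\pi_n^{-1}(W)$, and $\pi_0(\pi_n^{-1}(W))=F^n(W)$, so openness of $\pi_0$ is equivalent to openness of $F$---which the paper asserts but which fails for many continuous TT maps. However, your Kuratowski--Ulam substitute does not work as written. Viewing $\hat X$ as a closed subspace of the product $X\times X^{\BN_{\ge1}}$ and applying K--U in that product is vacuous: $\hat X$ is closed with empty interior there (one can choose $x_0,x_1$ independently in any basic product open set, and generically $F(x_1)\ne x_0$), so \emph{every} slice of $\hat X\setminus\hat G$ over $x$ is meager in the second factor, which says nothing about whether $\pi_0^{-1}(x)\cap\hat G$ is empty. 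The fibered version of K--U that would actually help---for the map $\pi_0:\hat X\to X$ itself---requires precisely the openness you have (correctly) disclaimed. So you have put your finger on the gap in the paper's own argument, but your proposed patch shares the same defect; a genuinely different idea is needed to finish this step.
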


\begin{proof}
Define 
\[
\widetilde X=\{\tilde x=(x_1,x_2,x_3\dots)\in X^{\BN_0}:x_n=F(x_{n+1})\}.
\] 
This is a compact metric space with the 
topology 
induced by product topology, with metric
\[\tilde d(\tilde x,\tilde y)=\sum_{n\ge 1}d_X(x_n,y_n)/2^n,\]
where $d$ is the metric on $x$. 
Define ${\widetilde F}:{\widetilde X}\to{\widetilde X}$ by
\[
{\widetilde F}(x_1,x_2,x_3\dots)=(F(x_1), x_1,x_2,x_3\dots),
\]
and note that ${\widetilde F}^{-1}(x_1,x_2,x_3\dots)=(x_2,x_3,x_4\dots)$, so
${\widetilde F}$ is a homeomorphism. 
The map $\pi_k:\widetilde X\to X$, defined 
$\pi_k(x_1,x_2,x_3,\dots)=x_k$, 
is surjective and open. It follows that 
$\widetilde X$ is perfect since $X$ is perfect.
The sets $\widetilde U_k=\pi_k^{-1}(U)$, for  
$k=1,2,3,\dots$ and $U\subseteq X$ open, form a sub-base 
for the topology on $\widetilde X$. 
Given  a countable base  $\cU$ for  $X$, 
let $\widetilde\cU$ consist of all nonempty sets of the form 
\begin{equation}\label{base}
\widetilde U=\pi_1^{-1} (U_1)\cap \pi_{2}^{-1} (U_2)\cap\dots\cap \pi^{-1}_\ell(U_\ell)\subseteq\widetilde X,
\end{equation}
for some $\ell\ge 1$ and $U_1,U_2,\dots, U_\ell\in\cU$.
Then $\widetilde \cU$ is a countable base for $\widetilde X$. 

For $\widetilde U\in\cU$, let 
\begin{equation}\label{yous}
U= 
F^{-\ell+1}U_1
\cap
F^{-\ell+2}U_2
\cap\dots\cap 
U_\ell
\subseteq X,
\end{equation}
and assume $U$ is nonempty. 
Since $F$ satisfies TT, the set 
\[
X_U=\bigcup_{n\ge 0} F^{-n}U=\{x\in X:O^+(x)\cap U\not=\emptyset\}
\] 
is dense open, so
$X_0=\bigcap_{U\in\cU} X_U$ is dense $G_\delta$.
We {\bf claim} that $O^+(\tilde x)$ is dense in $\widetilde X$
for any $\tilde x\in\pi_1^{-1}(X_0)$, and thus 
$\widetilde F$ satisfies TT. 

To prove the claim, fix $x\in X_0$ and let 
$\tilde x=(x,x_2,x_3,\dots)\in\pi_1^{-1}(X_0)$.  
Note that for any $k\ge 1$,
\[
{\tilde F}^k(\tilde x)=(F^k(x),F^{k-1}(x),\dots,x,x_2,\dots).
\] 
Given
${\widetilde U}\in {\widetilde \cU}$, let $U$ be as in (\ref{yous}).
Since $O^+(x)$ is dense and $X$ is perfect, we can 
choose $n\ge \ell-1$ so that $F^{n-\ell+1} x\in U$.  
Then 
\[
F^n (x)\in U_1,\,F^{n+1}(x)\in U_2,\ \dots,
\,F^{n+\ell-1}(x)\in U_\ell, 
\]
and it follows from (\ref{base}) that  ${\widetilde F}^n(\tilde x)\in \widetilde U$.
Since $\widetilde U\in\widetilde\cU$ was arbitrary, $O^+(\tilde x)$ is dense,
proving the claim.

Now, since $F$ satisfies TT and $\widetilde F$ is a homeomorphism of a perfect metric space $\widetilde X$, it follows from Proposition~\ref{invertible} that $\widetilde F$ satisfies  TTT.  
Thus $O^-({\tilde x})$ dense for $\tilde x\in \widetilde X_0\subseteq \widetilde X$, 
where $\widetilde X_0$ contains a dense $G_\delta$.
Since $\pi_1$
is surjective and open, $X_0=\pi_1(\widetilde X_0)$ contains a 
dense $G_\delta$, and for $\tilde x\in \widetilde X_0$, 
$\pi_1(O^-({\widetilde x}))$ is dense in $X$.
But $\pi_1(O^-({\widetilde x}))\subseteq O^-(\pi_1({\widetilde x}))=O^-(x)$, so $F$ satisfies PTT. 
\end{proof}

For $F:X\to X$, we call a set $B^-\subseteq X$ {\em a backward orbit} of $x\in X$  if $x_1=x$ 
and 
$B^-=\{x_1,x_2,x_3,\dots\}$ with 
$x_n=F(x_{n+1})$ for all $n\ge 1$. 
We say $F$ satisfies {\em strong Parry topological transitivity} (STT) if there 
exists a dense 
backward orbit for some $x\in X$ . 
Clearly STT implies PTT.  The proof of Theorem~\ref{ttiptt} shows that 
under the same hypotheses,  STT is equivalent to TT. 
Note that Example~\ref{first} does not satisfy STT although it does satisfy PTT.

\subsection{PTT for symbolic dynamical systems} 
Consider
the $1$-sided full shift 
\[
\cD^{\BN}=\{\bd=.d_1d_2d_3\dots:d_j\in\cD\},
\]
where $2\le\#(\cD)\le\infty$,
with left shift map $T(.d_1d_2d_3\dots)=.d_2d_3\dots$. Consider also 
the $2$-sided full shift $\cD^\BZ$ with left shift homeomorphism 
$\widetilde T(\dots d_{-1}.d_0d_1d_2\dots)=\dots d_{-1}d_0.d_1d_2\dots$. We use the product topology in 
each case.
If $\#(\cD)<\infty$, then these are compact metric spaces, homeomorphic to the 
Cantor set,  but in any case, they 
are uncountable,
totally disconnected, Polish spaces.

Call a 
subset $X\subseteq  \cD^{\BN}$ a {\em $1$-sided subshift} if it is 
closed and $T$ invariant: $T(X)\subseteq X$. 
Similarly, call a 
subset $Y\subseteq  \cD^{\BZ}$ a {\em $2$-sided subshift} if it is closed and 
$\widetilde T$ invariant: 
$\widetilde T(Y)=Y$. 
The {\em language} $\cL$ of $X$ (or $\cL$ of  $Y$) is the set of all finite words 
$w=w_0w_1\dots w_{\ell-1}$ (we say $|w|=\ell$) so that there exists $\bd=.d_1d_2d_2\dots\in X$ (or $\be=\dots d_{-1}d_0.d_1d_2\dots\in Y$) and $k\in\BN$  
(or $k\in\BZ$) with 
$w_0w_1\dots w_{\ell-1}=d_kd_{k+1}\dots d_{k+\ell-1}$.
Given a $1$-sided subshift $X$, 
we define its {\em natural extension} $\widetilde X$ to be the two sided subshift 
with the same language.
A sub-basis for the topology on $X$ is given by {\em cylinder sets}, which have the form 
\(
[w]=\{\bd\in X:\bd|_{[1,2,\dots,|w|]}=w\}
\), 
for $w\in\cL$. Similarly, a 
sub-basis for the topology on $Y$ is given by {\em cylinder sets}, which have the form 
\(
[w]=
\{\be\in Y:\be|_{[-\ell,-\ell+1,\dots,\ell-1,\ell]}=w\}
\), 
where $w\in\cL$ and $|w|=2\ell+1$. 
The following is an easy characterization of TT for $T:X\to X$ or
$\widetilde T:Y\to Y$.
\begin{lemma}
The $1$-sided {\rm (}or $2$-sided\,{\rm )} shift, $X$ {\rm (}or $Y$\,{\rm )}, is 
topologically transitive {\rm (}TT or TTT\,{\rm )} 
if and only if its language $\cL$ satisfies
\begin{equation}\label{lang}
\forall\ v,w\in \cL\ \exists\  c\in \cL \text{ so that } vcw\in \cL.
\end{equation}
\end{lemma}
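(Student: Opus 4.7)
The plan is to prove both implications by converting between dense orbits and combinatorial concatenations of words from $\cL$.

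For the forward direction, I would pick $x\in X$ with $O^+(x)$ dense (or $\be\in Y$ with dense $\widetilde T$-orbit in the two-sided case) and, for given $v,w\in\cL$, use that the cylinders $[v],[w]\subseteq X$ are nonempty open. Density yields some $n\ge 0$ with $T^n x\in[v]$; to force the two occurrences not to overlap I need $m\ge n+|v|$ with $T^m x\in[w]$. I will obtain such an $m$ by invoking Lemma~\ref{infinite}: the subshifts in question are Polish and, in the non-degenerate case, perfect, so $\{m:T^m x\in[w]\}$ is infinite and contains values as large as I wish. Setting $c=d_{n+|v|+1}\cdots d_m$ (possibly empty) then exhibits $vcw$ as the subword of $x$ running from position $n+1$ to $m+|w|$, so $vcw\in\cL$. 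The two-sided version runs identically with $n,m\in\BZ$.

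For the converse, I would enumerate the (necessarily countable) language as $\cL=\{w_1,w_2,w_3,\ldots\}$ and build a point whose forward orbit meets every basic cylinder. Start with $\bd^{(1)}=w_1$; given $\bd^{(k)}\in\cL$, I apply (\ref{lang}) with $v=\bd^{(k)}$ and $w=w_{k+1}$ to obtain $c_k\in\cL$ with
\[
\bd^{(k+1)}:=\bd^{(k)}\,c_k\,w_{k+1}\in\cL.
\]
Since each $\bd^{(k)}$ is a prefix of $\bd^{(k+1)}$, the approximants stabilize to an infinite word $\bd\in\cD^{\BN}$. Because $\bd^{(k)}\in\cL$, for each $k$ there is a point of $X$ beginning with $\bd^{(k)}$, and these approximating points converge in the product topology to $\bd$, so closedness of $X$ forces $\bd\in X$. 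Each $w_k$ appears in $\bd$ beginning at some position $n_k$, hence $T^{n_k-1}\bd\in[w_k]$; and since $\{[w_k]\}_{k\ge 1}$ is a base for the topology on $X$, $O^+(\bd)$ is dense, giving TT.

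The two-sided case is handled the same way, except I would alternately extend the approximating word on the right and on the left, each time applying (\ref{lang}) (with the roles of $v$ and $w$ interchanged for left-extensions) to prepend a connector followed by the next language element; the result is a bi-infinite $\be\in Y$ whose $\widetilde T$-orbit meets every cylinder. The main obstacle I anticipate is in the forward direction: to justify choosing $m\ge n+|v|$ I must know that $O^+(x)$ visits $[w]$ at unboundedly large times. Lemma~\ref{infinite} delivers this whenever perfectness of $X$ is available; in the degenerate situation where $X$ reduces to a single periodic orbit, the language condition can be checked directly from the periodicity, so no essential case is lost.
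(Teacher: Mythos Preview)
Your proposal is correct and follows essentially the same approach as the paper: density of an orbit together with Lemma~\ref{infinite} for the forward implication, and inductive concatenation of an enumeration of $\cL$ for the converse. The only minor difference is that in the two-sided forward direction the paper splits explicitly into the cases where $O^+(\be)$ or $O^-(\be)$ is dense (since TTT only guarantees density of the full orbit), whereas you compress this into ``runs identically with $n,m\in\BZ$''; your attention to closedness of $X$ and the perfectness edge case is, if anything, slightly more careful than the paper's own argument.
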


\begin{proof}
Suppose $O^+(\bd)$ is dense in the $1$-sided shift $X$. Given 
$v,w\in\cL$, let $[v]$ and $[w]$ be the corresponding cylinder sets. We have
that there exist $n,m\in\BN_0$ so that  $T^n (\bd)\in [v]$ and $T^m (\be)\in [w]$, 
and by Lemma~\ref{infinite}, we may assume $m\ge n+|v|$. We have 
$\bd_{[n,\dots,n+|v|-1]}=v$ and $\bd_{[m,\dots,m+|w|-1]}=w$. 
Since $m>n+|v|-1$, $\bd_{[n,\dots,m+|w|-1]}=vcw$ for some $c\in \cD^{m-n-|v|
+1}\cap\cL$.
Now suppose $O(\be)$ is dense in  $Y$. We are done if
$O^+(\be)$ is dense, 
so assume that $O^-(\be)$
is dense. Then there are 
$n,m\in\BN$ so that  
$T^{-n} (\be)\in [v]$ and $T^{-m} (\be)\in [w]$, and   
 $-n+|v|\le -m$. Thus 
$\be_{[-n,\dots,-m+|w|-1]}=vcw$
for some $c\in\cL$.

Conversely, suppose $\cL$ satisfies (\ref{lang}).
Enumerate 
$\cL=\{w_1,w_2,w_3\dots\}$, and by induction, choose a sequence 
$c_1,c_2,c_3,\dots\in \cL$ so that 
$w_1c_1w_2c_2\dots w_{n-1}c_{n-1}c_n\in\cL$ for all $n$.
Then 
\(
\bd=w_1c_1w_2c_2w_3c_3w_4\dots\in X
\)
 and $O^+(\bd)$ is dense. 
 In a similar way, if $\cL$ satisfies (\ref{lang}) for a two sided shift $Y$, then there exists 
\(
\be=\dots w_3b_2w_2b_1w_1c_1w_2\dots
\)  
with $O(\be)$ dense. 
\end{proof}

\begin{cor}\label{eight}
The $2$-sided natural extension $\widetilde X$ satisfies TTT 
{\rm (}equivalently, TT{\rm )} if and only if the corresponding $1$-sided shift 
$X$ satisfies TT.
\end{cor}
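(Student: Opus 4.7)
The plan is to observe that the preceding lemma already does essentially all of the work; the corollary is just a matter of chaining together its three cases through the common language condition~(\ref{lang}).

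First I would note the definitional point that the natural extension $\widetilde X$ is, by construction, the two-sided subshift with the same language $\cL$ as the one-sided subshift $X$. This is the only ingredient beyond the lemma itself.

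Next I would apply the lemma twice. Applied to the one-sided shift $X$, it says that $T:X\to X$ is TT if and only if $\cL$ satisfies~(\ref{lang}). Applied to the two-sided shift $\widetilde X$, it says that $\widetilde T:\widetilde X\to\widetilde X$ is TT (equivalently TTT, since the lemma covers both at once via the same argument) if and only if $\cL$ satisfies~(\ref{lang}). Since the language on the two sides is identical, these two ``iff's'' chain together to give the corollary.

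There is no real obstacle here: the argument is purely a transcription of the lemma. The one thing I would be careful about is simply to separate the two implications cleanly, and to record explicitly that in the lemma's proof one does not need $\cD$ to be finite, so the argument applies equally well to the Polish (non-compact) case when $\#(\cD)=\infty$, where Proposition~\ref{invertible} is not directly available; the equivalence of TT and TTT on $\widetilde X$ is built into the statement of the lemma rather than deduced from invertibility on a compact metric space.
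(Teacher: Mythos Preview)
Your proposal is correct and is exactly the approach the paper intends: the paper states Corollary~\ref{eight} without a separate proof precisely because it is immediate from the preceding lemma, and your argument spells this out by applying the lemma once to $X$ and once to $\widetilde X$ and using that they share the same language $\cL$. Your remark about the infinite-alphabet case is also on point and anticipates the paper's comment following Corollary~\ref{nine}.
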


\begin{cor}\label{nine}
If a $1$-sided shift $X$ satisfies TT then it satisfies PTT.
\end{cor}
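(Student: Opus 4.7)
My plan is to reduce to the 2-sided natural extension $\widetilde X$ and exploit the construction already given in the proof of the preceding lemma.

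First, I would use the preceding lemma to translate the hypothesis that $X$ satisfies TT into the language condition (\ref{lang}), and then apply Corollary~\ref{eight} to transfer this to TTT for $\widetilde X$. The proof of the preceding lemma actually exhibits an explicit point $\be = \dots w_3 b_2 w_2 b_1 w_1 . c_1 w_2 c_2 w_3 \dots \in \widetilde X$ whose two-sided orbit is dense, where $\{w_i\}$ enumerates $\cL$ and the intermediate words $b_i, c_i$ are chosen inductively via (\ref{lang}) so that the central block $V_n = w_n b_{n-1} \cdots w_1 c_1 \cdots c_{n-1} w_n$ lies in $\cL$ at each stage. The feature of this construction I really need is that every $w_i$ occurs in the \emph{left tail} of $\be$, beginning at some negative position $-n_i$.

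Next, I would introduce the natural projection $\pi: \widetilde X \to X$ which forgets the left half. This is continuous and intertwines the shifts, $T \circ \pi = \pi \circ \widetilde T$, so $\pi$ maps $\widetilde T$-backward orbits into $T$-backward orbits; in particular $\pi(\widetilde T^{-n}(\be)) \in O^-(\pi(\be))$ for every $n \ge 0$. A short check using that $X$ is closed and $\widetilde X$ shares its language confirms $\pi(\be) \in X$. For each $i$, the occurrence of $w_i$ starting at position $-n_i$ in $\be$ means that $\pi(\widetilde T^{-n_i}(\be))$ begins with $w_i$, so meets the cylinder $[w_i]$. Since the cylinders $\{[w] : w \in \cL\}$ form a basis for the topology on $X$, this forces $O^-(\pi(\be))$ to be dense in $X$, establishing PTT.

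The only real obstacle is a bookkeeping one: making precise the ``left tail contains every $w_i$'' property, which the preceding lemma's proof only sketches with ``In a similar way.'' Writing out the two-sided construction explicitly, alternately extending the central block $V_n$ on the left by a bridging word $b_n$ with $w_{n+1} b_n V_n \in \cL$ and on the right by a bridging word $c_n$ with $V_n c_n w_{n+1} \in \cL$ (both obtained from (\ref{lang})), confirms that both tails contain every word of $\cL$, which is exactly what is required.
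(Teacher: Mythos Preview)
Your proposal is correct and follows essentially the same route as the paper: pass to the two-sided natural extension $\widetilde X$, obtain a point there with dense backward orbit, and push it down via the one-sided projection $\pi_+$, using the intertwining $T\circ\pi_+=\pi_+\circ\widetilde T$ to land inside $O^-(\pi_+(\be))$. The paper's version is terser---it simply asserts the existence of $\tilde\bd$ with $O^-(\tilde\bd)$ dense and invokes openness of $\pi_+$---whereas you unpack the explicit point $\be=\dots w_3b_2w_2b_1w_1.c_1w_2c_2\dots$ and verify density cylinder by cylinder; your added care about the left tail is actually helpful, since the preceding lemma only sketches the two-sided construction and (in the infinite-alphabet case) one cannot simply quote Proposition~\ref{invertible} to get a dense $O^-$.
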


\begin{proof}
The natural extension $\widetilde X$ of $X$ is TT, and there 
exists $\tilde\bd\in \widetilde X$ so that $O^-(\tilde\bd)$ is dense. 
The $1$-sided factor map $\pi_+:\widetilde X\to X$, defined 
$\pi_+(\dots d_{-1}d_0.d_1d_2\dots)=.d_1d_2\dots$, is open and 
satisfies $\pi_+(\widetilde T(\bd))=T(\pi_+(\bd))$. Then for $\bd=\pi_+(\tilde\bd)$,
we have that $O^-(\bd)\subseteq\pi_+(O^-(\tilde \bd)$ is dense in $X$.
\end{proof}

\noindent Note that when $\#(\cD)<\infty$, Corollary~\ref{eight} and Corollary~
\ref{nine} follow from 
Theorem~\ref{ttiptt}. However, we will also be interested in the case $\#(\cD)=
\infty$.

\begin{example}\label{second}
Let $X\subseteq \{1,2,\overline 1,\overline 2\}^\BN$ be the subshift defined 
by forbidding the words
\(
\cF=\{\overline k\ell:k,\ell\in\{1,2\}\}.
\)
Here we have two $1$-sided $2$-shifts: 
``unbarred" $\{1,2\}^\BN$ and ``barred'' $\{\overline 1,\overline 2\}^\BN$, 
with the possibility of ``barring'' the tail of a point $\bd\in\{1,2\}^\BN$.
Any point $\bd\in X$ has $T^n(\bd)\in\{1,2\}^\BN\cup\{\overline 1,\overline 2\}^\BN$ for $n$ sufficiently large, so $O^+(\bd)$ is never dense.  
However, any $\overline\bd\in\{\overline 1,\overline 2\}^\BN$
has $O^-(\overline\bd)$ dense. 
\end{example}

\section{Piecewise interval maps}\label{pim}

Let $\lambda$ denote Lebesgue measure on $[0,1)$.
An {\em interval partition} 
is a finite or countable indexed collection 
\(
 \xi=\{\Delta(d)\subseteq[0,1):d\in\cD\}
\)
of $2\le\#(\xi)\le \infty$ disjoint intervals, with $\lambda(D)=1$, where 
\(
D=
\bigcup_{d\in\cD}\Delta(d).
\)
The intervals $\Delta(d)$, which have endpoints $a_d<b_d$, may be open, closed, half open 
$(a_d,b_d]$ or half-closed $[a_d,b_d)$.
Let $\Delta^\circ(d)=(a_d,b_d)$ and note that $\cup_{d\in\cD}\Delta^\circ(d)$ is always dense 
and open. 
We generally refer to elements of the 
index set $\cD$ as {\em digits}. 

A {\em piecewise interval map} (PIM) $F$ on $[0,1)$ is 
an interval partition $\xi$, together with 
a map
$F:D\rightarrow [0,1)$
such that 
\begin{enumerate}
\item each $F|_{\Delta(d)}$ is continuous and strictly monotonic,
\item $\lambda(B^c)=0$  where $B=\{x:F^n x\in D\text{ for all }n\ge 0\}$,
\item for all $d,d'\in\cD$ (including $d=d'$)  and $n\ge 0$, $\Delta(d)\cap T^{n}\Delta(d')$ is 
either an interval or empty (i.e., 
it does not consist of a single point).
Equivalently, $\Delta^\circ(d)\cap T^{n}\Delta^\circ(d')\not=\emptyset$.
\end{enumerate}

We often also assume that $F$ is surjective (and this is clearly necessary for $F$ to satisfy 
TT), although we do not require this.
For a PIM $F$, we say $F|_\Delta$ is  
{\em Type A} if it is increasing and {\em type B} if it  
is decreasing. We say $F$ is type A (or type B) if every $F|_\Delta$ is type A
(or type B). Otherwise, $F$ is called {\em mixed type}.
We say $F$ is {\em full} on $\Delta\in\xi$ if $F(\Delta)=[0,1)$. 
Condition (4) can always be achieved by taking each $\Delta\in\xi$ to be an open interval. The 
process of 
removing some endpoints from $\xi$ to make $F$ satisfy (4) 
only changes $D$ on a countable set. However, in 
certain examples, it is natural to keep the endpoints (see the examples below).

Since each $F|_{\Delta}$ is strictly monotonic, condition (3) is automatic if 
$D^c$ is countable. In particular, (3) always holds if $\xi$ is finite. 
Condition (3) also holds if $\lambda\left(\{x:F'(x)=0\}\right)=0$, since
this is equivalent  to $F$ being {\em nonsingular} in the sense that 
$\lambda(F^{-1}E)=0$ for each $E\subseteq[0,1)$
with $\lambda(E)=0$.   

In many cases (see e.g., \cite{karmacor}, \cite{boyarsky}) one can show more. 
We say a measure $\mu$ on $[0,1)$ is an  {\em absolutely continuous} 
$F$-invariant measure, {\em equivalent} to Lebesgue measure (ACIM),
If there exists an integrable function with $\rho(x)>0$ for $\lambda$ a.e.  
$x\in[0,1)$ so that $\mu$, defined by 
$\mu(E)=\int_E\rho(x)\,dx$, is $F$-invariant, namely, 
$\mu(F^{-1}E)=\mu(E)$. 
In particular, the existence 
of an ACIM implies that $F$ is nonsingular. 
Quite often one can also show that this  ACIM $\mu$ 
is an {\em ergodic} measure for $F$ (see the examples below). 
The following is a routine application of the Birkhoff ergodic theorem.

\begin{prop}\label{ergodic}
If a PIM $F$ has an ergodic ACIM
then 
$F$  satisfies TT. 
\end{prop}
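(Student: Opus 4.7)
The plan is a direct application of the Birkhoff pointwise ergodic theorem, leveraging the fact that the equivalence $\mu\sim\lambda$ forces every nonempty open subset of $[0,1)$ to carry positive $\mu$-mass. This converts ``$\mu$-generic'' behavior into a statement about visits to every basic open set.

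First, I would fix a countable basis $\{U_k\}_{k\ge 1}$ for the topology of $[0,1)$; concretely one can take the open intervals with rational endpoints inside $(0,1)$ together with the sets $[0,q)$ for rational $q\in(0,1)$. Since each $U_k$ is a nonempty open interval, $\lambda(U_k)>0$, hence $\mu(U_k)>0$ by equivalence. Note that $F^n x$ is well-defined $\mu$-a.e.\ because PIM condition (2) gives $\lambda(B^c)=0$ and $\mu\sim\lambda$, so there is no issue with the orbit being undefined on the set we care about.

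Next, for each fixed $k$ I apply Birkhoff's theorem to the bounded observable $\chi_{U_k}$. Ergodicity of $\mu$ under $F$ yields, for $\mu$-a.e. $x$,
\[
\frac{1}{N}\sum_{n=0}^{N-1}\chi_{U_k}(F^n x)\;\longrightarrow\;\mu(U_k)>0,
\]
which forces $F^n x\in U_k$ for infinitely many $n\ge 0$. Let $E_k=\{x:O^+(x)\cap U_k\ne\emptyset\}$; then $\mu(E_k)=1$. Taking the countable intersection $E=\bigcap_k E_k$ still gives $\mu(E)=1$, and since $\mu\sim\lambda$, $E$ has full Lebesgue measure and is in particular nonempty. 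Any $x\in E$ meets every $U_k$ under forward iteration, so $O^+(x)$ is dense in $[0,1)$, verifying TT.

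As the paper indicates, this is a routine application of the ergodic theorem; there is no substantive obstacle. The only point to check carefully is that $\{U_k\}$ is a genuine base for the topology on $[0,1)$ (so that hitting each $U_k$ really does mean density), but this is immediate for the explicit choice above. In particular, the conclusion is even stronger than needed: TT is witnessed by a full-measure set of starting points, not merely a single one.
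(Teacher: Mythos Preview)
Your argument is correct and is precisely the ``routine application of the Birkhoff ergodic theorem'' that the paper invokes in lieu of a written proof. The paper gives no further details, so there is nothing to compare beyond noting that your write-up fills in exactly the steps one would expect.
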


\subsection{$F$-representations}

Recall that $D=\cup_{d\in\cD}\Delta(d)\subseteq[0,1)$.  
By abuse of notation, 
we also denote by $\xi$ the map $\xi:D\to\cD$ 
with $\xi(x)=d$ for $x\in\Delta(d)$.
Given a PIM $F$, we define 
the $F$-representation 
of $x\in B$ to be the the sequence 
\[
\br(x)=.d_1d_2d_3d_4\dots\in\cD^{\BN},
\] 
where $d_n=\xi(F^{n-1}x)$ for $n\in\BN$. 
In ergodic theory, $\br(x)$ is called the $(F,\xi)$-name of $x$. 
We say that $F$-representations are {\em valid} 
if the map $\br$ is  
injective for $\lambda$ a.e. 
$x\in B$.

Parry observed in his paper \cite{parry2} that the previous conditions 
for validity
(i.e., in \cite{bissinger}, \cite{everett}, \cite{renyi})  were sufficient conditions, and were
``metric'' in nature. 
Probably the nicest result of this type 
is Kakeya's Theorem \cite{Kakeya}, which essentially says 
that $F$-representations are valid 
for PIMs $F$ of type A or B, 
provided $|F'(x)|>1$ almost everywhere. 
Parry observed that one ought to expect the necessary and sufficient conditions 
for validity to 
be {\em dynamical} in nature. He went on to prove that 
$F$-representations are valid if $F$ satisfies 
what we have called Parry topological transitivity.

\subsection{Examples}

\begin{example}[$\beta$-representations]
Consider the type A maps $F:[0,1]\to[0,1]$ defined by
$F(x)=\beta x\text{\ mod }1$, for 
$\beta>1$. Here $\xi(x)=\lfloor\beta x\rfloor$ with $\cD=\{0,1,\dots,\beta-1\}$ for 
$\beta\in\BN$, and 
$\cD=\{0,1,\dots,\lfloor\beta\rfloor\}$ for $\beta\not\in\BN$.
The $\beta$-representations were introduced in \cite{renyi}, who 
showed that every $\beta$-transformation $F$ 
has an ergodic ACIM (so satisfy TT). An explicit formula for the density $\rho(x)$ 
was 
given by Parry \cite{parry1}).
Parry (see \cite{parry2}) studied the more general 
$\alpha$-$\beta$-transformation, 
$F(x)=\alpha +\beta x\text{\ mod }1$, which he showed are not necessarily 
ergodic (or topologically transitive).
\end{example}

\begin{example}[Generalized Gauss transformations]
For real numbers $r\ge1$, define the type 2 map 
$F(x)=r/x\text{ mod }1$ 
with $\xi(x)=\lfloor r/x \rfloor$. 
The case $r=1$, known as the {\em Gauss transformation}, has an an ergodic 
ACIM with $\rho(x)=(\log(2)(x+1))^{-1}$. The existence of an ergodic ACIM for 
$r>1$ 
is proved in 
\cite{erblin} (an explicit formula for each $r\in\BN$ is given in  \cite{cor}).
Thus each such $F$ satisfies TT. The corresponding $F$-representations 
are (generalized) continued fraction coefficients. 
\end{example}

\begin{example}
[Quadratic maps]
For $s\approx 0.8$, $s<r\le 1$, consider $F:[0,1]\to[0,1]$ by
\[ 
 F(x)=-4r\left( 
 (1 - r - 4 r^2 + 4 r^3)-(1 - 8 r^2 + 8 r^3) x +
 r(1 - 2 r)^2  x^2)\right),
 \]
with $\xi(x)=0$ if $x<(1+2r-4r^2)/(2r-4r^2)$ and $\xi(x)=1$ otherwise (this is the map 
$q(x)=4rx(1-x)$, restricted to the interval $[q(r),r]$, then renormalized). 
These maps are commonly studied in chaos theory (see \cite{devaney}).
There is a set $r$ of positive Lebesgue measure with an ergodic ACIM and hence 
TT.
It is known that there is a set of values for $r$ of positive Lebesgue measure so 
that 
$F$ has an ergodic ACIM and hence is TT.  
Closely related to both the quadratic maps and $\beta$-transformations 
are the {\em tent maps} defined for $1<\tau\le 2$ by $P(x)=\tau\,x \text{\ wod\ }
1$,
where we define $y \text{\ wod\ }1=y\text{\ mod\ }1$ if $\lfloor y\rfloor$ is even, 
and
$1-(y\text{\ mod\ }1)$ if $\lfloor y\rfloor$ is odd.
For all $\tau$ sufficiently large, $F$ has an ergodic ACIM and hence is TT (see 
\cite{gora}).
\end{example}

\begin{example}[The Cantor map]\label{cantor}
This map $F$ is defined to be linear, increasing, and 
full on each intervals $\xi$ in the complement $K^c$ of the Cantor set
$K$. The intervals
in $\xi$ are naturally indexed by $\cD=\BZ[1/2]\cap (0,1)$, the dyadic rationals in 
$(0,1)$. 
Note that in this example $D=K^c$ is measure zero but uncountable. 
More generally, 
$\xi$ can be replaced 
by any interval partition. Such maps are called {\em generalized L\"uroth 
transformations} in \cite{karmacor} in the case of $\xi$ finite. All such maps are 
TT and ergodic for Lebesgue measure. 
\end{example}

\begin{example}[Generalized Egyptian fracions]\label{ef}
Define 
$F(x)=x-1/\lceil1/x\rceil$ and $\xi(x)=\lceil 1/x\rceil$.
Note that $O^-(0)$ is dense, so $F$ satisfies PTT, whereas 
$F^n(x)\searrow 0$ for all $x$, so 
$F$ does not satisfy TT.  
Note also that $O^{-}(x)$ is dense only for 
$x=0$,  and not for a dense $G_\delta$ set of $x$. 
Here, $B$ is the set of irrationals, and 
$x=1/d_1+1/d_2+1/d_3\dots$ 
is the infinite {\em greedy Egyptian fraction} expansion of an irrational $x$. 
More generally, for a strictly increasing sequence $\ba=(a_1,a_2,a_3,\dots)$ of positive 
integers, $a_1>1$, such that $1\le \sum 1/a_n\le \infty$ 
(e.g. the primes).  Let $\lceil y\rceil_\ba=a_n$ where $a_{n-1}<y\le a_n$, and
$F(x)=x-1/\lceil1/x\rceil_\ba$. The case $a_n=2^n$ gives binary expansions. 
\end{example}

\begin{example}
[Interval exchange transformations]
Let $\xi$ be an interval partition, and let
$\xi'$ be a ``permutation'' of $\xi$. In particular,  
suppose there is a bijection $\varphi:\xi\to\xi'$
such that  each $\Delta\in\xi$ there is $r(\Delta)\in(-1,1)$ so that 
$\varphi(\Delta)=\Delta+r(\Delta)$. 
Define $F(x)=x+r(\Delta)$ for $x\in\Delta$
(see \cite{keane}, \cite{iet}). 
Interval exchanges preserve Lebesgue measure. Various conditions for ergodicity 
and TT are known (see \cite{keane}, \cite{veech}, \cite{masur}). 
Included here 
are the circle rotations $F$, which can be realized as $2$-exchanges 
$\xi=\{[0,\alpha),[\alpha,1)\}$ (labeled $0$ and $1$), with  
TT and ergodicity if and only if  $\alpha\not\in\BQ$.
The resulting $F$-representations are Sturmian sequences.
Similarly, the von Neumann adding machine transformation $F$ 
is an exchange of the partition $\xi$ into intervals of lengths $1/2^n$, in order of 
decreasing length, $\xi'$ the partition into the same intervals, but in order of increasing lengths. 
This is TT and ergodic.
Up to metric isomorphism, any ergodic measure preserving transformation $F$ can be 
realized as a (usually infinite) interval exchange (see \cite{aow}).  It should be noted that 
interval exchange transformations $F$
differ from the other examples discussed here
because they are invertible.     
Orientation reversing interval exchange transformations were studied in \cite{nog},
but they rarely satisfy TT.
\end{example}

\section{Parry's Theorem}

In this section we state and prove our main results about topological transitivity and valid $F$-expansions for piecewise 
interval maps $F$. The first result is Parry's theorem \cite{parry2}.
Our contribution is to extend the proof to the mixed type case. 

\begin{thm}[Parry \cite{parry2}]
\label{pt}
Suppose $F$ is a PIM {\rm (}type A, type B or mixed type{\rm )}. 
If $F$ satisfies PTT, then 
$F$-representations are valid. 
\end{thm}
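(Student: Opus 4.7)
The plan is to argue by contradiction: assume $F$ satisfies PTT, so $O^-(x^*)$ is dense in $[0,1)$ for some $x^*$, and suppose simultaneously that $F$-representations are not valid. The first move is a structural reduction. For $z\in B$ with $\br(z)=.d_1d_2\dots$, define
\[
[z]=\{y\in B:\br(y)=\br(z)\}=\bigcap_{n\ge 0}F^{-n}\Delta(d_{n+1}).
\]
Since each $F|_{\Delta(d)}$ is continuous and strictly monotonic, $F^n|_{[z]}$ is a continuous strictly monotonic injection for every $n$, and the classes $[z]$ are nested intersections of intervals, hence intervals contained in $B$. Non-validity is equivalent to the existence of some nondegenerate class $I$. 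By density of $O^-(x^*)$ in the open set $I^\circ$, I would pick $y\in I^\circ$ and $n\ge 0$ with $F^n y=x^*$. Then $F^n|_{I^\circ}$ is a continuous strictly monotonic injection onto an open interval $F^n(I^\circ)$ on which $\br$ takes the constant value $\br(x^*)$; hence $F^n(I^\circ)\subseteq [x^*]$, so $K:=[x^*]$ is nondegenerate and $x^*\in F^n(I^\circ)\subseteq K^\circ$.

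Next I split on periodicity of $\br(x^*)$ under the shift $T$. If $\br(x^*)$ is not periodic, then any $y\in K\cap O^-(x^*)$ with $F^k y=x^*$ satisfies $T^k\br(x^*)=T^k\br(y)=\br(F^k y)=\br(x^*)$, forcing $k=0$ and $y=x^*$. Consequently $O^-(x^*)\cap K^\circ=\{x^*\}$, which is not dense in the nondegenerate open interval $K^\circ$---contradicting PTT. If instead $\br(x^*)$ has minimal period $p$, then $F^p(K)\subseteq K$ and the restriction $G:=F^p|_K\colon K\to K$ is a continuous strictly monotonic injection; the same periodicity computation gives $O^-(x^*)\cap K=O^-_G(x^*)$, the $G$-backward orbit of $x^*$.

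The proof then turns on a monotone-dynamics analysis of $G$ on the nondegenerate interval $K$. If $G$ is increasing, the iterated preimages $G^{-m}(x^*)$ (as far as they are defined) form a strictly monotonic sequence in $K$, so $O^-_G(x^*)$ is a countable, nowhere-dense set. If $G$ is decreasing---the case where the extension beyond Parry's original type-A/B argument is needed, since in the mixed-type setting $F^p|_K$ may well be decreasing---pass to the increasing iterate $G^2$ and decompose
\[
O^-_G(x^*)\subseteq O^-_{G^2}(x^*)\cup O^-_{G^2}(G^{-1}x^*),
\]
each a monotonic convergent sequence, so again nowhere dense in $K^\circ$. Either way, $O^-(x^*)\cap K^\circ$ fails to be dense in the nondegenerate open interval $K^\circ$, contradicting PTT. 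The principal technical obstacle is precisely this periodic case with a decreasing iterate: the passage to $G^2$ and the decomposition of the backward orbit is the new ingredient needed to make the mixed-type version of Parry's proof go through.
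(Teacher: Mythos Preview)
Your argument is correct and takes a genuinely different route from the paper's. Both proofs must ultimately rule out a nondegenerate fibre $K=[x^*]$ for the distinguished point with dense backward orbit, and in the periodic/absorbing situation both use the same monotone-dynamics analysis, including the passage to $G^2$ when the iterate is decreasing; that trick is already in the paper's Lemma~\ref{noia}, so it is not the new ingredient you suggest. The real differences lie elsewhere. The paper first establishes $\Delta(x^*)=\{x^*\}$ by invoking the general homterval dichotomy (wandering versus absorbing, Lemma~\ref{fundd}), propagates this along $O^-(x^*)$, and then uses the flip lexicographic order of Lemma~\ref{order} to squeeze an arbitrary pair $u<v$ between two points of $O^-(x^*)$ and deduce $\br(u)\prec\br(v)$; it is this flip lex order that the paper presents as the device extending Parry's original type~A/B argument to the mixed case. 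You bypass both the homterval dichotomy and the flip lex machinery entirely: your opening move of pushing an arbitrary nondegenerate fibre $I$ forward through a preimage $y\in I^\circ\cap O^-(x^*)$ reduces everything to the single fibre at $x^*$, and your non-periodic case (the analogue of the wandering case) is then dispatched by the one-line shift computation $T^k\br(x^*)=\br(x^*)\Rightarrow k=0$. Your route is thus more elementary and self-contained; the paper's route embeds the result in the standard one-dimensional-dynamics framework and develops the order-theoretic Lemma~\ref{order}, which has independent interest.
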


\noindent Parry also proved the following partial converse, which we prove below for 
convenience. 

\begin{prop}[Parry, \cite{parry2}]\label{pconv}

Let $F$ be a PIM so that $F^{-1}(0)$ includes all the endpoints of $\xi$ except possibly $0$ or 
$1$. If $F$-representations are valid then $F$ satisfies PTT. 

\end{prop}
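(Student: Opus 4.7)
My plan is to show that $O^-(0)$ itself is dense in $[0,1)$, so that PTT holds with distinguished point $x=0$. The hypothesis on endpoints of $\xi$ is designed precisely so that every boundary point of every $n$-cylinder (so long as it lies in $(0,1)$) belongs to $O^-(0)$; validity will then supply cylinders sitting strictly inside any prescribed nonempty open interval, and the endpoints of those cylinders will witness density.

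I would first recall that validity says the coding map $\br:B\to\cD^{\BN}$ is injective off a $\lambda$-null set, which is equivalent to
\[
\bigcap_{n\ge 1} C_n(x) = \{x\} \quad \text{for }\lambda\text{-a.e.~}x\in B,
\]
where $C_n(x)=\Delta(d_1)\cap F^{-1}\Delta(d_2)\cap\cdots\cap F^{-(n-1)}\Delta(d_n)$ is the length-$n$ cylinder containing $x$ (with $\br(x)=.d_1d_2\cdots$); since each $F|_{\Delta(d)}$ is monotonic and continuous, an easy induction shows the $C_n(x)$ are nested intervals, so this intersection property forces $\lambda(C_n(x))\to 0$ for $\lambda$-a.e.~$x$. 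Next I would carry out the endpoint analysis. Let $\alpha\in(0,1)$ be a boundary point of some $C_n(d_1,\dots,d_n)$. If all $F^{k-1}(\alpha)\in\Delta^{\circ}(d_k)$ for $1\le k\le n$, then combining the local continuity of each $F|_{\Delta(d_k)}$ at $F^{k-1}(\alpha)$ yields some $\varepsilon>0$ with $F^{k-1}(B(\alpha,\varepsilon))\subseteq\Delta^{\circ}(d_k)$ for every $k\le n$, i.e.~$B(\alpha,\varepsilon)\subseteq C_n(d_1,\dots,d_n)$; this is a two-sided open neighborhood of $\alpha$ inside the cylinder, contradicting that $\alpha$ is a boundary point. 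Hence some $F^{k-1}(\alpha)$ is an endpoint of $\Delta(d_k)$, and by hypothesis that endpoint is either $0$ (so $\alpha\in O^-(0)$) or lies in $F^{-1}(0)$ (so $F^k(\alpha)=0$ and again $\alpha\in O^-(0)$), the value $1$ being ruled out because $F$ takes values in $[0,1)$.

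Finally, I would conclude density of $O^-(0)$. For any nonempty open interval $(a,b)\subseteq[0,1)$: if $a=0$ then $0\in(a,b)\cap O^-(0)$; otherwise $(a,b)\subseteq(0,1)$ has positive Lebesgue measure, so I can choose some $x\in(a,b)$ with $\lambda(C_n(x))\to 0$ and take $n$ large enough that $\overline{C_n(x)}\subseteq(a,b)\subseteq(0,1)$. Both endpoints of $C_n(x)$ then lie in $(a,b)\cap(0,1)$, and by the previous step both belong to $O^-(0)$, so $O^-(0)\cap(a,b)\neq\emptyset$. This proves $O^-(0)$ is dense, hence $F$ satisfies PTT. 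The main obstacle I anticipate is the cylinder-endpoint identification in the middle step: one must track how a boundary point of a high-order cylinder is forced through possibly alternating monotonic branches back to an endpoint of $\xi$ at some iterate $\le n-1$, but the inductive propagation of interior points above handles this cleanly regardless of whether $F$ is type A, type B, or mixed.
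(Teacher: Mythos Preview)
Your proposal is correct and follows essentially the same route as the paper: show that $O^-(0)$ is dense by identifying the endpoints of the refined partition $\xi^{(n)}$ (in $(0,1)$) as elements of $O^-(0)$, and then use validity (cylinders shrink) to conclude these endpoints are dense. The paper simply asserts the endpoint identity $|\xi^{(n)}|=\bigcup_{k=0}^{n-1}F^{-k}(0)\cup\{0,1\}$ and proceeds, whereas you supply the argument behind it via the contrapositive interior-point propagation; your version is thus a more detailed execution of the same idea.
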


\noindent We also prove this below. 
Next, we state  our ``modern'' version of Parry's Theorem.

\begin{thm}\label{npt}
Suppose $F$ is a PIM {\rm (}type A, type B or mixed type{\rm )}. If $F$ satisfies TT then 
$F$-representations are valid.
\end{thm}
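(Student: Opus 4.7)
The plan is to argue by contradiction: assuming that $F$ satisfies TT but $F$-representations are not valid, I derive a contradiction by combining the invariance of the ``ambiguous set'' with the rigidity of continuous monotonic self-maps of an interval.

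I would set up notation as follows. For $\bd\in\cD^{\BN}$, let $J_\bd=\bigcap_n\overline{[\bd|_{[1,n]}]}$, a possibly trivial closed subinterval of $[0,1)$; let $A=\{\bd:|J_\bd|>0\}$ and $E=\bigsqcup_{\bd\in A}J_\bd$. The failure of validity forces $\lambda(E)>0$, and since disjoint non-degenerate subintervals of $[0,1)$ have positive lengths summing to at most $1$, $A$ is countable. The key structural observation is that since each branch $F|_{\Delta(d)}$ is a continuous strictly monotonic homeomorphism onto its image, $F(J_\bd)\subseteq J_{T\bd}$ and $T\bd\in A$ whenever $\bd\in A$ (the image of a non-degenerate interval under a continuous strictly monotonic map is again non-degenerate). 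A symmetric argument, pulling any non-degenerate subinterval of $J_{\bd'}$ back through a branch, shows that if $x\in B$ and $F(x)\in E$ then $x\in E$, except at a countable set of accidental boundary points. Thus both $E$ and $B\setminus E$ are (essentially) forward $F$-invariant.

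Next, I would pick $x_0\in B$ with $O^+(x_0)$ dense in $[0,1)$ and split into two cases. If $\mathrm{int}([0,1)\setminus E)\ne\emptyset$, then $E$ (which contains $\mathrm{int}(J_\bd)$ for any $\bd\in A$) and $[0,1)\setminus E$ both contain non-empty open sets, and by the invariance $O^+(x_0)$ lies entirely in one of them and misses the non-empty interior of the other, contradicting density. The delicate case is when $E$ is dense in $[0,1)$. Then $[0,1)\setminus E$ is nowhere dense, so the forward orbit cannot lie in it and $x_0\in J_{\bd_0}$ for some $\bd_0\in A$. Observing that the proof of Lemma~\ref{infinite} does not actually use continuity of $F$---it only uses that in a perfect metric space $V$ minus a finite set remains non-empty and open---I would apply it with $V=\mathrm{int}(J_{\bd_0})$ to conclude $F^n(x_0)\in J_{\bd_0}$ for infinitely many $n$. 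By the invariance this forces $T^n\bd_0=\bd_0$ for infinitely many $n$, so $\bd_0$ is $T$-periodic with some least period $p\ge 1$.

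I would finish by deriving a contradiction from this periodicity. The composition $F^p|_{J_{\bd_0}}$ is a self-map of the interval $J_{\bd_0}$ (since $T^p\bd_0=\bd_0$ gives $F^p(J_{\bd_0})\subseteq J_{\bd_0}$), and as a composition of $p$ continuous strictly monotonic branches it is itself continuous and monotonic. The sub-orbit $\{F^{kp}(x_0):k\ge 0\}$ is its orbit starting at $x_0$. But a continuous monotonic self-map of an interval admits no dense orbit: if it is increasing, every orbit is monotone and converges to a fixed point; if it is decreasing, its square is increasing and the same conclusion applies. Thus the sub-orbit misses a non-empty open subinterval $V\subseteq\mathrm{int}(J_{\bd_0})$. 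For $n$ not divisible by $p$ we have $T^n\bd_0\ne\bd_0$, so $F^n(x_0)\in J_{T^n\bd_0}$, whose intersection with $\mathrm{int}(J_{\bd_0})$ is empty because distinct $J_\bd$'s have disjoint interiors; in particular $F^n(x_0)\notin V$. For $n=kp$, $F^n(x_0)$ lies in the sub-orbit and is also not in $V$. Hence $O^+(x_0)$ misses $V$, contradicting density. The main obstacle is this second case: once $E$ is allowed to be dense, a dense orbit can in principle live entirely inside $E$, and surmounting this requires the two-step descent---first to $T$-periodicity of the coding sequence $\bd_0$, and then to the rigidity of monotonic interval self-maps---to defeat density within the single periodic fiber $J_{\bd_0}$.
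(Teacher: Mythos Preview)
Your argument is correct, but it is organized differently from the paper's. The paper's proof is a two-line contrapositive: by Lemma~\ref{summ}, invalidity produces a single homterval $\Delta(x)$; by the classical dichotomy Lemma~\ref{fundd} this homterval is either wandering or sits inside a period-$p$ absorbing interval $I$; and Lemma~\ref{nohom} shows TT excludes both (a dense orbit meets a wandering interval at most once, while on an absorbing interval the continuous monotone self-map $F^p|_I$ forces every orbit to converge to a periodic point). You instead work globally with the set $E=\bigsqcup_{\bd\in A}J_\bd$, use its forward $F$-invariance, and split on whether $\operatorname{int}([0,1)\setminus E)$ is empty. Your Case~2 rediscovers the absorbing-interval half of Lemma~\ref{nohom}: periodicity of $\bd_0$ makes $J_{\bd_0}$ an absorbing interval of period $p$, and you then run the same monotone self-map argument. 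In fact Case~2 alone suffices: since $\operatorname{int}(E)\supseteq\operatorname{int}(J_\bd)\ne\emptyset$, the dense orbit enters $E$ and (by forward invariance) stays there, so after replacing $x_0$ by a later iterate you may always assume $x_0\in J_{\bd_0}$, and the rest of your Case~2 argument applies regardless of whether $E$ is dense. The paper's route is more modular, outsourcing the case split to the standard wandering/absorbing dichotomy; yours is self-contained and bypasses Lemma~\ref{fundd}, at the cost of reproving its absorbing half. Two phrasings to tighten: in Case~1 the orbit need not lie \emph{entirely} in $E$, only eventually (finitely many initial iterates may be outside), which still prevents density in $\operatorname{int}([0,1)\setminus E)$; and in Case~2 the correct justification for $x_0\in J_{\bd_0}$ is the forward-invariance argument just described, not that a dense orbit ``cannot lie in'' a nowhere dense set.
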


\subsection{Some preliminaries} 

Let $F$ be a PIM. 
An interval $I\subseteq[0,1)$ is called a {\em homterval}
if $F^n|_{I}$ is continuous and strictly monotonic  for each $n\ge1$. In particular, $F$ is 
a 
homeomorphism between $I$ and each  
$F^n(I)$. There are two special kinds of homtervals. A homterval $I$
is called a  {\em wandering interval} if 
$F^n(I)\cap F^m(I)=\emptyset$ for all $m>n\ge 0$.
A homterval $I$ is called {\em absorbing interval} with period $p\ge 1$ if 
$I, F(I), \dots, F^{p-1}(I)$ pairwise disjoint,
and $F^p(I)\subseteq I$. Here we have that
$F^p_I:I\to J$, for $J=F^p(I)$ is a homeomorphism.
The following is a basic result of 1-dimensional dynamics (see \cite{vanstrien}).

 \begin{lemma} \label{fundd}
 If $J$ is a homterval, then either $J$ is a wandering interval or $J\subseteq I$ for 
 an absorbing interval $I$ with some order $p$.
 \end{lemma}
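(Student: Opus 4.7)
The plan is to build the absorbing interval around a suitable iterate of $J$ using the first return time under $F^p$, then verify it contains $J$. Assume $J$ is not wandering, so there exist $0\le i<j$ with $F^i(J)\cap F^j(J)\neq\emptyset$, and let $p=j-i$ be minimal over all such pairs. By minimality, for any $a\ge 0$ the intervals $F^a(J), F^{a+1}(J),\dots, F^{a+p-1}(J)$ are pairwise disjoint; otherwise a smaller gap would already appear, contradicting the choice of $p$.

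Next I would produce a candidate absorbing interval. Set $K=F^i(J)$, which is itself a homterval since $F^i|_J$ is a continuous strictly monotonic homeomorphism, and let $g=F^p$, so that $K\cap g(K)\neq\emptyset$. Since $K$ is a homterval, $g|_{g^k(K)}$ is continuous and strictly monotonic for every $k\ge 0$; applying $g$ to the nonempty intersection gives $g(K)\cap g^2(K)\neq\emptyset$, and induction yields $g^k(K)\cap g^{k+1}(K)\neq\emptyset$ for all $k$. Therefore $I:=\bigcup_{k\ge 0}g^k(K)$ is an increasing chain of pairwise overlapping intervals, hence an interval, and by construction $F^p(I)=g(I)\subseteq I$.

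To finish I would verify that $I$ is an absorbing interval. (i)~$I$ is a homterval, because each $g^k(K)$ lies inside a single $\xi$-piece (from the homterval property of $K$) and consecutive iterates overlap inside that piece, so no discontinuity of any $F^r$ sits in the interior of $I$, and the continuous monotonic pieces glue to a single continuous monotonic function. (ii)~$I, F(I),\dots,F^{p-1}(I)$ are pairwise disjoint: each $F^a(I)$ is a union of iterates $F^{a+i+kp}(J)$, and by the minimality of $p$ any two such iterates with indices differing by less than $p$ are disjoint. (iii)~$F^p(I)\subseteq I$ holds by construction.

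The main obstacle will be the containment $J\subseteq I$ itself, since the construction naturally delivers an absorbing interval containing $F^i(J)$ rather than $J$ when $i>0$. To close this gap I would first replace $J$ by the maximal homterval $\tilde J\supseteq J$ (the union of all homtervals containing $J$ is again a homterval), which is still non-wandering whenever $J$ is; then argue that for this maximal choice one may already take $i=0$, since otherwise the homeomorphism $F^i|_{\tilde J}$ could be used to transfer a one-sided extension of $F^i(\tilde J)$ at an overlap back to $\tilde J$, contradicting maximality. Executing this maximality step while tracking partition boundaries of $\xi$ is the delicate part of the proof.
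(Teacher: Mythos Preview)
Your construction is exactly the paper's: pick $n,p$ with $F^n(J)\cap F^{n+p}(J)\neq\emptyset$ and set $I=\bigcup_{\ell\ge0}F^{n+\ell p}(J)$.  The paper then simply \emph{asserts} both that $J\subseteq I$ and that $I,F(I),\dots,F^{p-1}(I)$ are pairwise disjoint, with no argument for either; so the obstacle you isolate at the end is real, and the paper's proof glosses over it rather than resolving it.  Your instinct to pass to the maximal homterval is the standard way to close this, and is more than the paper does.

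That said, your step~(ii) has its own gap.  Minimality of $p$ gives $F^m(J)\cap F^n(J)=\emptyset$ only when $0<|m-n|<p$.  But $F^a(I)=\bigcup_{k\ge0}F^{i+a+kp}(J)$ and $F^b(I)=\bigcup_{l\ge0}F^{i+b+lp}(J)$, and the index difference $(a-b)+(k-l)p$ has absolute value at least $p+1$ as soon as $|k-l|\ge2$; nothing you have said excludes, for instance, $F^{i}(J)\cap F^{i+2p-1}(J)\neq\emptyset$, which would put a point in $I\cap F^{p-1}(I)$.  The clean fix handles both problems simultaneously: take $I$ to be the \emph{maximal} homterval containing $F^i(J)$ (or, as you propose, containing $J$).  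Maximality turns every overlap $I\cap F^r(I)\neq\emptyset$ into an inclusion $F^r(I)\subseteq I$, so choosing $q$ minimal with $F^q(I)\subseteq I$ gives $I\cap F^r(I)=\emptyset$ for $0<r<q$ directly; you then still need an argument that $F^a(I)\cap F^b(I)=\emptyset$ for $0<a<b<q$ (this does not follow by applying $F^{-a}$, since $F$ is not injective), and finally the containment $J\subseteq I$.  In short, you have correctly located the two soft spots in the paper's argument; both require the maximality device you describe, and the disjointness step needs more than the minimality of $p$ alone.
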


\begin{proof}
Suppose $J$ is a homterval that is not a wandering interval.
Then  there exist $n\ge 0$ and $p\ge 1$ so that $F^nJ\cap F^{n+p}J\not=\emptyset$,
and the interval $F^nJ\cup F^{n+p}J$ is a homterval.
Repeatedly applying $F^p$ gives 
$F^{n+\ell p}J \cap F^{n+(\ell+1)p}J\not=\emptyset$ for each $\ell\ge 0$.
It follows that $I=\bigcup_{\ell=0}^\infty F^{n+\ell k} J$ is a homterval  with $J\subseteq I$.
Moreover, $I,F(I),\dots,F^{p-1}(I)$ are pairwise disjoint and $F^p|_I$ is a homeomorphism from 
$I$ onto a
subinterval: $F^p(I)\subseteq I$. 
\end{proof}

\begin{lemma}\label{nohom}
If a PIM $F$ satisfies TT, then there can be no homtervals.
\end{lemma}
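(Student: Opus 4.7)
The plan is to argue by contradiction. Suppose $F$ satisfies TT and possesses a homterval $J$ with $J^\circ \neq \emptyset$. Fix $x \in [0,1)$ with $O^+(x)$ dense; since $[0,1)$ is perfect, Lemma~\ref{infinite} guarantees that $\{k \ge 0 : F^k(x) \in J^\circ\}$ is infinite. The dichotomy in Lemma~\ref{fundd} then says either $J$ is wandering or, after replacing $J$ by a forward iterate (which is again a homterval with nonempty interior), $J \subseteq I$ for an absorbing interval $I$ of period $p$. I would handle the two cases separately.

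The wandering case is immediate: pick two visit times $n < m$ and set $y = F^n(x) \in J$. Then $F^{m-n}(y) = F^m(x) \in J$, so $F^{m-n}(J) \cap J \neq \emptyset$, contradicting the defining disjointness of a wandering interval.

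For the absorbing case, the intervals $I, F(I), \dots, F^{p-1}(I)$ are pairwise disjoint with $F^p(I) \subseteq I$, and $I$ inherits the homterval property from the construction in Lemma~\ref{fundd}. Let $m_1$ be the first $k$ with $F^k(x) \in J \subseteq I$. A short induction using $F^p(I) \subseteq I$ and the disjointness shows that past $m_1$ the orbit is trapped in $\bigcup_{j=0}^{p-1} F^j(I)$, visiting $I$ exactly at times $m_1 + kp$ with value $g^k(F^{m_1}(x))$, where $g := F^p|_I$. The crux is that $g$ is a continuous strictly monotonic injection of the interval $I$ into itself, and any orbit of such a map accumulates at at most two points: if $g$ is increasing, $(g^k(y))_k$ is monotonic and convergent; if $g$ is decreasing, its even- and odd-indexed subsequences are each monotonic and convergent. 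Hence $\{g^k(F^{m_1}(x)) : k \ge 0\}$ is not dense in $I^\circ$, and since it agrees with $O^+(x) \cap I$ modulo at most $m_1$ points, $O^+(x) \cap I^\circ$ fails to be dense in $I^\circ$. This contradicts the fact that density of $O^+(x)$ in $[0,1)$ forces density in every nonempty open subset.

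The main obstacle is the absorbing case. The bookkeeping---that the $F$-orbit past $m_1$ meets $I$ only at times $m_1 + kp$ with value $g^k(F^{m_1}(x))$---relies on the disjointness of the forward iterates of $I$ together with $F^p(I) \subseteq I$. The real work is the monotone-self-injection observation for $g$, which in turn depends on $I$ being a homterval. The wandering case follows for free from Lemma~\ref{infinite}. A minor technical caveat is that the accumulation points of the $g$-orbit may lie on $\partial I$, but the argument only needs that there are finitely many of them.
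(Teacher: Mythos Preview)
Your proof is correct and follows essentially the same route as the paper's: invoke the dichotomy of Lemma~\ref{fundd}, dismiss the wandering case because a dense orbit must revisit $J$, and in the absorbing case observe that $g=F^p|_I$ is a monotone self-map of an interval whose orbits accumulate on at most two points and hence cannot be dense in $I^\circ$. One small technicality: Lemma~\ref{infinite} as stated hypothesizes that $F$ is continuous, which a PIM need not be, but its proof nowhere uses continuity (it only uses that $X$ is perfect), so your invocation is harmless.
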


\begin{proof}
Suppose to the contrary that $J$ is a homterval and $O^+(x)$ is dense. 
If $J$ is a wandering interval, then $O^+(x)$ can meet $J$ at most once. This contradicts 
the density of  $O^+(x)$. 
By Lemma~\ref{fundd}, the only other possibility is that 
there is period $p\ge 1$ absorbing interval $I$ with $J\subseteq I$.
Since 
$O^+(x)$ is dense, $F^n(x)\in O^+(x)\cap I$ for some $n\ge 0$. We can 
assume without loss of generality that $n=1$ so that $x\in I$. 
We claim this implies that $O^+(x)\cap I$ 
is not dense, which is a contradiction.

To prove the claim we note that 
 $F^n(x)\in I$ only if $n=kp$ for some $k$ (since $I$ has period $p$), 
so without loss of generality we may assume $p=1$, and assume  
$F$ maps $I$ homeomorphically onto $F(I)$. 
Let $x\in I$ and consider $O^+(x)$.
One possibility is that $F(x)=x$, in which case $O^+(x)=\{x\}$ which is not dense in 
$I$. Now we divide into two cases: either $F|_I$ is strictly increasing or $F|_I$ is strictly 
decreasing. In the increasing case, suppose $F(x)\not=x$, and assume without loss of generality that $F(x)>x$. Then there is fixed point $F(y)=y\in I$ so that $F^n(x)$ is increasing 
and $F^n(x)\nearrow y$. Again 
$O^+(x)$ is not dense. 
In the decreasing case, we replace $F|_I$ with $(F^2)|_I$, which is 
increasing.

\end{proof}

Note that a period $p$ absorbing interval $I$
always contains at least one point $y$ of period $p$. That point satisfies
$O^+(y)=\{x,F(y),\dots,F^{p-1}(y)\}$, with $F^p(y)=y$. 
The iterates of any non-periodic point $x\in I$ limit onto some 
finite $O^+(y)$ (size $p$), as in the proof.  
This situation is described in \cite{vanstrien} by saying that $F$ has a  {\em periodic attractor}
of period $p$.

\begin{lemma}\label{noia}
If a PIM $F$ satisfies PTT then there can be no absorbing interval.
\end{lemma}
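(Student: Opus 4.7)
The plan is to argue by contradiction. Suppose $F$ satisfies PTT, with dense backward orbit $O^-(x)$ for some $x \in [0,1)$, and suppose there exists an absorbing interval $I$ of period $p$.

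First, I would set $A = I \cup F(I) \cup \cdots \cup F^{p-1}(I)$, which is a disjoint union of intervals, and observe that $F(A) \subseteq A$: indeed $F(F^j(I)) = F^{j+1}(I) \subseteq A$ for $j = 0, \dots, p-2$, and $F(F^{p-1}(I)) = F^p(I) \subseteq I$. Moreover, since $I$ is a homterval, each $F^j|_I \colon I \to F^j(I)$ is a homeomorphism. Because $O^-(x)$ is dense and $I$ has nonempty interior, some preimage of $x$ lies in $I$, and since $F(A) \subseteq A$ every point of $A$ has its entire forward orbit in $A$, so $x \in A$. Say $x \in F^j(I)$ for the unique $j \in \{0, \dots, p-1\}$, and let $z \in I$ be the unique point with $F^j(z) = x$.

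Next, I would let $G = F^p|_I$, a homeomorphism of $I$ into itself, and show that
\[
O^-(x) \cap I \;\subseteq\; O_G^-(z) := \{w \in I : G^k(w) = z \text{ for some } k \geq 0\}.
\]
If $y \in I$ satisfies $F^n(y) = x$, then $F^n(y)$ lies in $F^{n \bmod p}(I)$ by iterating $F^p(I) \subseteq I$, and the pairwise disjointness of the pieces of $A$ forces $n \bmod p = j$, so $n = kp + j$. Then $F^{kp}(y) \in I$ and $F^j(F^{kp}(y)) = x = F^j(z)$, so injectivity of $F^j|_I$ gives $G^k(y) = z$. Density of $O^-(x)$ in the interior of $I$ therefore forces $O_G^-(z)$ to be dense in $I$.

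Finally, I would derive the contradiction by showing $O_G^-(z)$ is nowhere dense in $I$. Since $G$ is injective, $O_G^-(z) = \{z_k : k \geq 0\}$ where $z_0 = z$ and $z_{k+1} = G^{-1}(z_k)$ whenever defined. If $G$ is increasing, then $G^{-1}$ is increasing where defined, and applying it repeatedly to the inequality between $z_0$ and $z_1$ inductively produces a strictly monotonic sequence in $I$, which has a single limit point and is nowhere dense. If $G$ is decreasing, the same argument applied to the increasing map $G^2$ shows that $\{z_{2k}\}$ and $\{z_{2k+1}\}$ are each monotonic, so $O_G^-(z)$ is contained in the union of two monotonic sequences, with at most two accumulation points and empty interior. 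Either case contradicts density.

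The main obstacle is the mixed-type case, where $G = F^p|_I$ may be orientation reversing, requiring the passage to $G^2$ and the separate analysis of the even- and odd-indexed subsequences; the increasing case by itself is immediate from monotonicity of $G^{-1}$. A minor technical point is the use of the homterval hypothesis in the definition of absorbing interval to ensure $F^j|_I$ is a homeomorphism for all $j \leq p$, which makes the reduction of $O^-(x) \cap I$ to a single backward $G$-orbit clean.
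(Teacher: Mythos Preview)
Your proof is correct and follows essentially the same approach as the paper: reduce to the injective self-map $G=F^p|_I$ of the absorbing interval, observe that the relevant backward orbit inside $I$ is a single $G^{-1}$-trajectory, and use monotonicity (passing to $G^2$ in the orientation-reversing case) to see this trajectory cannot be dense. Your treatment of the reduction from period $p$ to the map $G$ is in fact more explicit than the paper's, which simply asserts ``without loss of generality $p=1$'' by analogy with Lemma~\ref{nohom}.
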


\noindent This observation is essentially due to Parry \cite{parry2}.

\begin{proof}
Let $I$ be an absorbing interval of period $p$. First, as in the proof of Lemma~\ref{nohom}, we assume without loss of generality 
that $p=1$, so that $F|_I:I\to J$, $J=F(I)\subseteq I$, is a homeomorphism. 
If $x\not\in J$ then then $F^{-1}(x)\cap I=\emptyset$, so $O^-(x)$ cannot be dense. 
Thus we assume $x\in J$, and show that $O^-(x)$ is not dense in $J$.

Consider the homeomorphism $(F|_I)^{-1}:J\to I$. 
We assume without loss of generality that $(F|_I)^{-1}$ is increasing (otherwise replace $F|_I$ with $(F|_I)^{2}$ and $J$ with $(F|_I)^{2}(I)$).
If there is an 
$n>0$ so that $(F|_I)^{-n}(x)\not\in J$ then 
$O^-(x)\cap J$ is finite. Thus we 
assume $(F|_I)^{-n}(x)\in J$ for all $n\ge 0$.  One possibility is that 
$(F|_I)(x)=x$, but this implies $O^-(x)$ is not dense in $J$. Thus
assume that $(F|_I)(x)>x$ (the case $(F|_I(x)<x$ is analogous). 
This implies that $(F|_I)^{-n}(x)$ is a bounded increasing sequence
(the graph of $(F|_I)^{-1}$ is above the diagonal on a neighborhood of $x$).
In particular, $(F|_I)^{-n}(x)$ is not dense.
\end{proof}

Next we study iterations of the the partition $\xi$. For 
$d_1d_2\dots d_n\in\cD^n$, let 
\[
\Delta(d_1d_2\dots d_n)=\{x:\br(x)_{[1,\dots,n]}=d_1d_2\dots d_n\}.
\]
Equivalently,
\begin{align}
\Delta(d_1d_2\dots d_n)
&=\Delta(d_1)\cap F^{-1}\Delta(d_2)\cap\dots\cap F^{-n+1}\Delta(d_n)\nonumber\\
&=\Delta(d_1)\cap F^{-1}\Delta(d_2d_3\dots d_n)\label{induct}\\
&=\Delta(d_1d_2\dots d_{n-1})\cap F^{-n+1}\Delta(d_n).\nonumber 
\end{align}
By our assumption (3) on $F$, the set $\Delta(d_1d_2\dots d_n)$ is either empty or a nontrivial interval.
In the latter case, we call it a {\em fundamental interval} of order $n$ 
(or a {\em cylinder}).
Let $\xi^{(n)}$ be the interval partition 
into fundamental intervals of order $n$, and 
define $||\xi^{(n)}||=\sup\{|\Delta|:\Delta\in\xi^{(n)}\}$, where 
$|\Delta|$ denotes the length of $\Delta$. 
It is clear that $\br$ is injective if and only if $||\xi^{(n)}||\rightarrow 0$.
In ergodic theory, one usually writes
\[
\xi^{(n)}=\bigvee_{k=1}^{n}F^{-k+1}\xi.
\]
If $||\xi^{(n)}||\rightarrow 0$ then $\xi$  is called a {\em generating partition} for $F$.

\begin{proof}[Proof of Proposition~\ref{pconv}]
Denote the endpoints of $\xi$ by $|\xi|$. By the hypotheses 
$|\xi|= F^{-1}(0)\cup\{0,1\}$, and similarly 
$|\xi^{(n)}|=\bigcup_{k=0}^{n-1} F^{-k}(0)\cup\{0,1\}$. 
 Since $F$-representations are valid,
  $||\xi^{(n)}||\rightarrow 0$, which implies 
 $O^-(0)\cup\{0,1\}=\bigcup_{n\ge 1}|\xi^{(n)}|$ is dense.
 It follows that $F$ is PTT.
\end{proof}

For $x\in B$, let $\Delta^n(x)$ be the interval 
in $\xi^{(n)}$ that contains $x$. 
Thus, 
 $||\xi^{(n)}||\not\longrightarrow 0$ if and only 
if there exists an $x$ so that $|\Delta^n(x)|\not\longrightarrow 0$.
Note that 
$\Delta^{n+1}(x)\subseteq\Delta^n(x)$. Define
\[
\Delta(x)=\bigcap_{n\in\BN}\Delta^n(x)
\]
Either $\Delta(x)$
 is a (nontrivial) 
interval or $\Delta(x)=\{x\}$, with 
the former if and only if 
$|\Delta^n(x)|\not\longrightarrow 0$, (i.e., 
if and only if
$F$-representations are not valid). 

All $y\in \Delta(x)$ satisfy $\br(y)=\br(x)$ and 
$\Delta(y) = \Delta(x)$.
When $\Delta(x)$ is a nontrivial interval, each map $(F^n)|_{\Delta(x)}$,
for $n\in\BN$, 
is continuous and strictly monotonic  (i.e., a homeomorphism onto its 
range). In particular, such an interval $\Delta^n(x)$ is a homterval. 
We summarize these last few paragraphs in a lemma.

\begin{lemma}\label{summ}
If $F$-representations are not valid then there exists $x\in B$ so that 
$\Delta(x)$ is a homterval.
\end{lemma}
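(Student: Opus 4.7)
The plan is essentially to collect the observations made in the paragraphs immediately before the lemma into a single argument, proceeding by contrapositive.

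First I would assume every $\Delta(x)$, for $x\in B$, reduces to the single point $\{x\}$, and deduce that $F$-representations are valid. Indeed, if $x,y\in B$ have the same $F$-representation $\br(x)=\br(y)=.d_1d_2\dots$, then by the defining formula for $\Delta(d_1\dots d_n)$ both $x$ and $y$ lie in $\Delta^n(x)=\Delta(d_1\dots d_n)$ for every $n$, so $y\in\Delta(x)=\{x\}$ and hence $y=x$. This gives that $\br$ is actually injective on $B$, which is stronger than validity. Contrapositively, if $F$-representations are not valid then there exists some $x\in B$ for which $\Delta(x)$ is a nontrivial interval; equivalently (by nestedness of the $\Delta^n(x)$), $|\Delta^n(x)|\not\to 0$.

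Next I would verify that such a $\Delta(x)$ is a homterval. Fix $n\ge 1$ and let $\br(x)=.d_1d_2\dots$, so that $\Delta(x)\subseteq\Delta^n(x)=\Delta(d_1\dots d_n)$. Using the identity
\[
\Delta(d_1d_2\dots d_n)=\Delta(d_1)\cap F^{-1}\Delta(d_2)\cap\dots\cap F^{-n+1}\Delta(d_n),
\]
we get that for every $0\le k\le n-1$, the image $F^k(\Delta(x))$ is contained in the single partition element $\Delta(d_{k+1})\in\xi$. Since each $F|_{\Delta(d)}$ is continuous and strictly monotonic by the PIM axioms, an induction on $k$ shows that $F^k|_{\Delta(x)}$ is continuous and strictly monotonic for each $k\ge 0$. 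As $n$ was arbitrary, $\Delta(x)$ is a homterval.

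I do not anticipate a real obstacle here, since the lemma is largely a repackaging of the discussion above: the only care needed is in the contrapositive step, where one must be explicit that if every $\Delta(x)$ is a singleton then $\br$ is injective on all of $B$ (so certainly $\lambda$-a.e.\ injective, i.e.\ valid). The remaining verification that a nontrivial $\Delta(x)$ is a homterval is then immediate from the product-of-cylinders formula together with axiom (1) in the definition of a PIM.
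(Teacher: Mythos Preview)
Your proposal is correct and takes essentially the same approach as the paper: the lemma is stated explicitly as a summary of the preceding paragraphs, and your argument simply unpacks those observations (nontriviality of $\Delta(x)$ from failure of injectivity, then the homterval property from the cylinder formula together with axiom~(1)). Your contrapositive formulation via injectivity of $\br$ on $B$ is in fact slightly cleaner than the paper's route through $\|\xi^{(n)}\|\not\to 0$, but the content is identical.
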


\begin{proof}[Proof of Theorem~{\rm \ref{npt}}]
Suppose $F$-representations are not valid. By Lemma~\ref{summ} there is a homterval $\Delta(x)$, and by Lemma~\ref{nohom}, $F$ cannot be TT.  
\end{proof}

\subsection{Flip lexicographic order}

Let $\cA=\{d\in\cD:F|_{\Delta(d)}\text{ is increasing}\}$ and 
$\cB=\{d\in\cD:F|_{\Delta(d)}\text{ is decreasing}\}$, 
so that  
$\cD=\cA\cupdot\cB$ is a disjoint union.
Note that $\cD=\cA$ if $F$ is type A, and $\cD=\cB$ if $F$ is type B.
For two intervals $\Delta,\Delta'\in \xi$ say $\Delta<\Delta'$ if $x<x'$ for all 
$x\in\Delta$, $x'\in\Delta'$. This induces an order on $\cD$ by $d<d'$ if $\Delta(d)<\Delta(d')$.
This order, in turn, leads to the following order on $\cD^{\BN}$ called {\em flip lexicographic order}. 

\begin{definition}
Suppose $\cD\subseteq\BZ$. Given 
$\bd=.d_1d_2d_3\dots\in\cD^\BN$, $\be=.e_1e_2e_3\dots\in \cD^{\BN}$,
with $\bd\not=\be$,
let $n=\min\{j\ge 1: d_j\not= e_j\}$. Let $p=0$ if $n=1$ and otherwise 
$p=\#\{j=1,\dots,n-1:d_j=e_j\in \cB\}$. 
Define 
$\bd\prec\be$  if 
$d_n<e_n$ and $p$ is even, or if $d_n>e_n$ and $p$ is odd.
Otherwise, define
$\be\prec\bd$. 
We will write 
$\bd\preceq\bd$ if $\bd\prec\be$ or $\bd=\be$.
\end{definition}

If $F$ is type A, this is lexicographic order, and if $F$ is Type B, it is alternating 
lexicographic order. Parry's proof \cite{parry2} of Theorem~\ref{pt}  
 assumes one of these two cases. Flip lexacographic order  
appears in \cite{milnorthurston}.

\begin{lemma}\label{order}
If $x<y$ then $\br(x)\preceq\br(y)$. Conversely, if $\br(x)\prec\br(y)$ then $x<y$. In particular, if 
$\br(x)\not=\br(y)$ then $x\not =y$.
\end{lemma}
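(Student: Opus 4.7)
The plan is to prove the slightly stronger biconditional that whenever $\br(x)\ne\br(y)$, one has $x<y$ if and only if $\br(x)\prec\br(y)$. Both halves of the lemma drop out of this: if $x<y$, then either $\br(x)=\br(y)$ (in which case $\br(x)\preceq\br(y)$ trivially) or else the biconditional gives $\br(x)\prec\br(y)$; and if $\br(x)\prec\br(y)$, then in particular $\br(x)\ne\br(y)$, so the biconditional forces $x<y$. The ``in particular'' tail of the statement is then immediate, since $\br(x)\ne\br(y)$ forces one of $\br(x)\prec\br(y)$ or $\br(y)\prec\br(x)$ and hence $x\ne y$.

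To prove the biconditional, write $\br(x)=.d_1d_2\dots$ and $\br(y)=.e_1e_2\dots$, let $n$ be the first position at which they differ, and let $p$ be the parity count from the definition of $\prec$. Both $x$ and $y$ lie in the fundamental interval $\Delta(d_1\cdots d_{n-1})$ (interpreted as $[0,1)$ when $n=1$), and iterating the recursive identity (\ref{induct}) shows $F^k$ maps this cylinder into $\Delta(d_{k+1})$ for each $0\le k\le n-1$. The key observation is then that $F^{n-1}$ restricted to $\Delta(d_1\cdots d_{n-1})$ is the composition of the $n-1$ maps $F|_{\Delta(d_j)}$, each of which is continuous and strictly monotonic (increasing if $d_j\in\cA$, decreasing if $d_j\in\cB$). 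Hence the composition is itself continuous and strictly monotonic, increasing precisely when the number of $\cB$-factors is even, i.e., when $p$ is even, and decreasing when $p$ is odd.

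To close the argument, $F^{n-1}(x)\in\Delta(d_n)$ and $F^{n-1}(y)\in\Delta(e_n)$ lie in distinct intervals of $\xi$, whose relative order is that of their indices, so $F^{n-1}(x)<F^{n-1}(y)$ iff $d_n<e_n$. Combining with the monotonicity direction of $F^{n-1}$ yields $x<y$ iff either ($p$ even and $d_n<e_n$) or ($p$ odd and $d_n>e_n$), which is precisely the definition of $\br(x)\prec\br(y)$.

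The one subtle point -- the main obstacle, such as it is -- is justifying that $F^{n-1}|_{\Delta(d_1\cdots d_{n-1})}$ really does decompose as a product of $n-1$ individually monotonic pieces. This rests on condition (3) of the PIM, which guarantees that the cylinder $\Delta(d_1\cdots d_{n-1})$ is an honest nondegenerate interval and that each intermediate image $F^k(\Delta(d_1\cdots d_{n-1}))$ sits inside the single piece $\Delta(d_{k+1})$ of $\xi$ where the next factor $F$ is monotonic. Once that is in hand, the rest is the chain-rule principle that the direction of monotonicity flips each time a $\cB$-digit is encountered -- pure bookkeeping.
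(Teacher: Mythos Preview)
Your proof is correct and follows essentially the same approach as the paper: both arguments hinge on the fact that $F^{n-1}$ restricted to the common cylinder $\Delta(d_1\cdots d_{n-1})$ is strictly monotonic with direction determined by the parity $p$, and then compare the positions of $\Delta(d_n)$ and $\Delta(e_n)$. Your packaging as a single biconditional is slightly more economical than the paper's separate treatment of the two directions, and there is one harmless index slip (the containment $F^k(\Delta(d_1\cdots d_{n-1}))\subseteq\Delta(d_{k+1})$ holds for $0\le k\le n-2$, not $n-1$), but the substance is the same.
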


\begin{proof}
Let $x<y$ and 
$\bd=\br(x)$ and $\be=\br(y)$.
One possibility is that $y\in \Delta(x)$, so $\Delta(x)=\Delta(y)$, in which case, $\bd=\be$. %$
Otherwise there is a smallest $n\ge 1$ so that $\Delta^n(x)\not=\Delta^n(y)$.
If $n=1$, then
$\Delta^1(x)=\Delta(d_1)<\Delta(e_1)=\Delta^1(y)$, so $d_1<e_1$. Since 
$p=0$, this implies $\br(x)\prec \br(y)$. 
If $n>1$ then 
$x,y\in\Delta(d_1d_2\dots d_{n-1})$
and $p\le n-1$.
If $p$ is even, 
$F^{n-1}|_{\Delta(d_1d_2\dots d_{n-1})}$ is increasing, 
and since $x<y$,  $F^{n-1}(x)<F^{n-1}(y)$. 
We then have $\Delta(d_n)<\Delta(e_n)$ so that $d_n<e_n$. This implies $\bd\prec\be$ since 
$p$ is even.
If, on the other hand,   $p$ is odd, then $F^{n-1}|_
{\Delta(d_0d_1\dots d_{n-1})}$ is decreasing, and  
$x<y$ implies $F^{n-}(y)<F^{n-1}(x)$, which implies $\Delta(e_n)<\Delta(d_n)$ and $e_n<d_n$. 
Since $p$ is odd, this still implies $\bd\prec\be$.

Conversely, suppose $\br(x)\prec \br(y)$.  If $d_1<e_1$ then 
$\Delta(d_1)<\Delta(e_1)$ and $x<y$. 
Now suppose 
 $x,y\in{\Delta(d_1d_2\dots d_{n-1})}$, but 
$d_n\not= e_n$. Since $\bx\prec\by$, we have 
$d_n<e_n$ if $p$ is even and $e_n<d_n$ if $p$ is odd.  
In the first case we have $F^n(x)<F^n(y)$ and in the second, 
$F^n(y)<F^n(x)$ (because $F^n(x)\in\Delta(x_n)$, and likewise for $y$).
Note that $F^{n}|_{\Delta(x_0,x_1,\dots x_{n-1})}$ is continuous, 
and either increasing or decreasing, depending on whether $p$ 
is even or odd.
In both cases, this implies $x<y$. 
\end{proof}

\begin{lemma}
Let $F$ satisfy PTT, and let $x$ be such that $O^-(x)$ is dense
in $[0,1)$. Then $\Delta(x)=\{x\}$.
\end{lemma}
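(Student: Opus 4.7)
The plan is to argue by contradiction, leveraging the earlier lemmas about homtervals. Suppose $\Delta(x) \neq \{x\}$, so that $\Delta(x)$ is a nontrivial interval. As noted in the paragraph preceding Lemma~\ref{summ}, on such an interval every iterate $F^n$ is continuous and strictly monotonic, so $\Delta(x)$ is a homterval.

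Next I would invoke the dichotomy of Lemma~\ref{fundd}: any homterval is either a wandering interval or sits inside an absorbing interval. Since $F$ satisfies PTT, Lemma~\ref{noia} rules out the absorbing case. Thus $\Delta(x)$ itself must be a wandering interval, meaning the forward iterates $\Delta(x), F(\Delta(x)), F^2(\Delta(x)),\dots$ are pairwise disjoint; in particular $\Delta(x) \cap F^k(\Delta(x)) = \emptyset$ for every $k \geq 1$.

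Now I would bring in the hypothesis that $O^-(x)$ is dense in $[0,1)$. The nontrivial interval $\Delta(x)$ has nonempty interior $\Delta^\circ(x)$, and density of $O^-(x)$ in $[0,1)$ forces density of $O^-(x) \cap \Delta^\circ(x)$ in $\Delta^\circ(x)$; in particular this intersection is infinite. I would then pick some $y \in O^-(x) \cap \Delta^\circ(x)$ with $y \neq x$, and let $k \geq 1$ be such that $F^k(y) = x$. Since $y \in \Delta(x)$, we have $x = F^k(y) \in F^k(\Delta(x))$, while also $x \in \Delta(x)$, so $\Delta(x) \cap F^k(\Delta(x)) \neq \emptyset$, contradicting the wandering property.

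The main obstacle, and really the only subtle point, is ensuring $y$ can be chosen distinct from $x$: if the preimages of $x$ accumulated on $\Delta(x)$ only at $x$ itself, no contradiction would appear. This is exactly where the hypothesis bites beyond what Lemmas~\ref{fundd} and~\ref{noia} give: a single point cannot be dense in a nontrivial interval, so density of $O^-(x)$ in $\Delta^\circ(x)$ supplies infinitely many preimages $y \neq x$ inside $\Delta(x)$, which is what drives the contradiction.
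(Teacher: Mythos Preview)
Your proof is correct and follows essentially the same route as the paper: reduce to $\Delta(x)$ being a homterval, use Lemma~\ref{noia} to exclude the absorbing case, apply Lemma~\ref{fundd} to force $\Delta(x)$ wandering, and then derive a contradiction with the density of $O^-(x)$ in the interval $\Delta(x)$. The paper phrases the final contradiction by showing the sets $F^{-n}(\Delta(x))$ are pairwise disjoint (so that $O^-(x)\cap\Delta(x)=\{x\}$), whereas you pick a preimage $y\ne x$ in $\Delta(x)$ directly; these are two sides of the same observation.
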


\begin{proof}
If $\Delta(x)\not=\{x\}$, then by Lemma~\ref{summ}, $\Delta(x)$ is a homterval. 
Since $F$ satisfies PTT, Lemma~\ref{noia} implies $\Delta(x)$ cannot be 
an absorbing interval, so by Lemma~\ref{fundd}, $\Delta(x)$ must be a wandering interval. We 
show this is impossible. 

Suppose $F^n(\Delta(x))\cap F^m(\Delta(x))=\emptyset$ for all $m>n\ge 0$. 
This is equivalent to 
$F^{-m}(\Delta(x))\cap F^{-n}(\Delta(x))=\emptyset$ for all $n>m\ge 0$. 
Now
$F^{-n}(x)\subseteq F^{-n}(\Delta(x))$ for all $n$, 
but this containment is never dense. It follows that 
$O^-(x)=\cup_{n\ge 0}F^{-n}(x)$ cannot be dense in $[0,1)$.  

Thus $\Delta(x)=\{x\}$ as claimed. 
\end{proof}

\begin{proof}[Proof of Theorem~\ref{pt}] 
First note that $\Delta(z)=\{z\}$ whenever $Fz=y$ and $\Delta(y)=\{y\}$. 
Thus for any $x$ with $O^-(x)$ dense, $z\in O^-(x)$ implies  $\Delta(z)=\{z\}$.

Let $u<v$ and take $y,z\in O^-(x)$ so that $u<y<z<v$.
By Lemma~\ref{order}, $\br(u)\preceq\br(y)\prec\br(z)\preceq\br(v)$, so that 
$\br(u)\prec\br(v)$. Then by Lemma~\ref{order} again,  $\br(u)\not=\br(v)$.
\end{proof}

\section{$f$-expansions and a generalization}\label{fexpg}

Given a PIM $F$, define the {\em $F$-shift}
\[
X=\overline{\{\br(x):x\in B\}}\subseteq\cD^{\BN},
\] 
with the left shift map $T$.
Indeed, this is a 1-sided shift since $T(\br(x))=\br(F(x))$. Let $\widetilde X$, with 
$\widetilde T$, be the 2-sided natural extension of $X$, 
and let $\cL$ be the language common to both shifts.

\begin{lemma}
A word $d_1d_2\dots d_n\in\cL$ if and only if 
$\Delta(d_1d_2\dots d_n)$ is an interval, or equivalently, 
$\Delta^\circ(d_1d_2\dots d_n)\not=\emptyset$
\end{lemma}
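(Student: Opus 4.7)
The plan is to establish the equivalence by combining property (3) of a PIM with the fact that $\lambda(B^c) = 0$. I would first note that, by property (3) together with the recursion in (\ref{induct}) applied inductively, each cylinder $\Delta(d_1\dots d_n)$ is either empty or a nontrivial interval; this observation is already recorded in Section~\ref{pim}. In particular, the conditions ``$\Delta(d_1\dots d_n)$ is an interval'' and ``$\Delta^\circ(d_1\dots d_n)\neq\emptyset$'' coincide, which settles the ``equivalently'' clause of the statement.

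For the implication ``$\Delta(d_1\dots d_n)\neq\emptyset\Rightarrow d_1\dots d_n\in\cL$'', the idea is to find a point of the cylinder whose $F$-representation is defined. Since a nonempty cylinder is a nontrivial interval it has positive Lebesgue measure, and because $\lambda(B^c)=0$ there exists $x\in\Delta(d_1\dots d_n)\cap B$. Then $\br(x)\in X$ by the definition of $X$, and by the definition of $\Delta(d_1\dots d_n)$ its first $n$ symbols are exactly $d_1\dots d_n$. Hence $d_1\dots d_n\in\cL$.

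For the converse, suppose $d_1\dots d_n\in\cL$. Unpacking the definition of the language, some $\bd=.e_1 e_2 e_3\dots\in X$ contains $d_1\dots d_n$ at a position $k$, i.e.\ $e_k e_{k+1}\dots e_{k+n-1}=d_1\dots d_n$. Since $X=\overline{\{\br(x):x\in B\}}$, I approximate $\bd$ by a sequence $\br(x_m)$ with $x_m\in B$; convergence in the product topology on $\cD^{\BN}$ is coordinate-wise, so for $m$ sufficiently large, $\br(x_m)_{[k,\dots,k+n-1]}=d_1\dots d_n$. Using forward-invariance of $B$ under $F$, the point $y_m=F^{k-1}(x_m)$ lies in $B$ and satisfies $\br(y_m)_{[1,\dots,n]}=d_1\dots d_n$, hence $y_m\in\Delta(d_1\dots d_n)$ and the cylinder is nonempty.

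The main mild obstacle is the passage through the closure in the converse direction: when $\#(\cD)=\infty$ the closure of $\{\br(x):x\in B\}$ may strictly contain that set, so an arbitrary $\bd\in X$ need not itself be of the form $\br(x)$. The product topology, however, is generated by cylinder sets depending on only finitely many coordinates, so any finite window of $\bd$ is eventually matched by an approximating $\br(x_m)$, after which the rest of the argument reduces to a bookkeeping shift by $F^{k-1}$.
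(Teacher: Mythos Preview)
Your argument is correct and follows the same line as the paper's proof: both directions hinge on finding some $x\in B$ with $\br(x)_{[1,\dots,n]}=d_1\dots d_n$, using property~(3) to know cylinders are either empty or nontrivial intervals, and using $\lambda(B^c)=0$ to intersect a nonempty cylinder with $B$. The paper simply asserts at the outset that $w\in\cL$ if and only if $w=\br(x)_{[1,\dots,n]}$ for some $x\in B$, whereas you spell out why this holds by passing through the closure $X=\overline{\{\br(x):x\in B\}}$, approximating, and shifting by $F^{k-1}$; your version is more careful on exactly this point.
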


\begin{proof}
Note that $w\in\cL$ if and only if 
$w=\br(x)_{[1,2,\dots,n]}=.d_1d_2\dots d_n$
for some $x\in B$. Then by (3) and (4),  $\Delta(d_1,d_2,\dots,d_n)$
is an interval.
Conversely, suppose 
$\Delta(d_1,d_2,\dots,d_n)$
is an interval. Let $x\in B\cap 
\Delta(d_1,d_2,\dots,d_n)$.
Then 
$.d_1d_2\dots d_n=\br(x)_{[1,2,\dots,n]}\in \cL$
since $\br(x)\in X$.

\end{proof}

For $w=d_1d_2\dots d_n\in\cL$, let
${\overline\Delta}(d_1,d_2,\dots,d_{n})=[a_n,b_n]$, so $\Delta^\circ(d_1,d_2,\dots,d_{n})=(a_n,b_n)$.
Note that 
${\overline\Delta}(d_1,d_2,\dots,d_{n})\subseteq{\overline\Delta}(d_1,d_2,\dots,d_{n-1})$. 
Thus if
$F$ representations are valid,  
 $|{\overline\Delta}(d_1,d_2,\dots,d_{n})|\to 0$ as $n\to\infty$
for any $\bd=.d_1d_2d_3\dots\in X$. Then 
$\{x\}=\bigcap_n {\overline\Delta}(d_1,d_2,\dots,d_{n})$
and we define $E(\bd)=x$. 
 If $\bd=\br(x)$ for $x_0\in B$, then $x\in\Delta(d_1,d_2,\dots,d_n)$
for all $n$, so in this case, $E(\br(x))=x$. 
We summarize.

\begin{prop} Suppose $F$-representations are valid. Then for every 
$\bd=.d_1d_2d_3\dots\in X$ there exists a unique $x:=E(\bd)\in [0,1]$ so that 
$\{x\}=\bigcap_n {\overline\Delta}(d_1,d_2,\dots,d_{n})$. In particular,
then $E(\bd)=\lim_n a_n=\lim_n b_n$. 
If $x\in B$ and $\bd=\br(x)$ then  
$E(\bd)=x$.  
\end{prop}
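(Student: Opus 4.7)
The plan is to exploit the fact that validity of $F$-representations is equivalent to $\|\xi^{(n)}\|\to 0$ (as observed before Lemma~\ref{summ}), which forces the closed intervals $\overline{\Delta}(d_1,\dots,d_n)$ to shrink to a single point, and then to identify this point with $x$ whenever $\bd=\br(x)$.

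First I would verify that for any $\bd=.d_1d_2d_3\dots\in X$, every finite prefix $d_1d_2\dots d_n$ lies in the language $\cL$; this is immediate from the definition of $\cL$ as the set of all finite subwords appearing in points of $X$. The preceding lemma then guarantees that $\Delta(d_1,\dots,d_n)$ is a nondegenerate interval, so its closure $[a_n,b_n]=\overline{\Delta}(d_1,\dots,d_n)$ is a well-defined nondegenerate closed subinterval of $[0,1]$.

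Next, the recursive identity (\ref{induct}) gives $\Delta(d_1,\dots,d_{n+1})\subseteq\Delta(d_1,\dots,d_n)$, so the closures are nested: $[a_{n+1},b_{n+1}]\subseteq[a_n,b_n]$. Thus $(a_n)$ is nondecreasing, $(b_n)$ is nonincreasing, and $a_n\le b_n$ for every $n$. Validity of $F$-representations now yields $b_n-a_n=|\Delta(d_1,\dots,d_n)|\le\|\xi^{(n)}\|\to 0$, and the nested interval theorem delivers a unique $x\in[0,1]$ with $\bigcap_n[a_n,b_n]=\{x\}$, together with $a_n\nearrow x$ and $b_n\searrow x$. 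Setting $E(\bd):=x$ then handles existence, uniqueness, and the endpoint-limit assertion in a single stroke.

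Finally, when $\bd=\br(x_0)$ for some $x_0\in B$, the very definition of $\br$ puts $x_0\in\Delta(d_1,\dots,d_n)\subseteq\overline{\Delta}(d_1,\dots,d_n)$ for every $n\ge 1$, forcing $x_0\in\bigcap_n\overline{\Delta}(d_1,\dots,d_n)=\{E(\bd)\}$ and hence $E(\bd)=x_0$. The proposition is essentially a compilation of prior observations, so no serious obstacle is expected; the only small subtlety is to confirm that prefixes of an arbitrary $\bd\in X$ still belong to $\cL$ even when $\bd$ is merely a limit point of $\{\br(x):x\in B\}$, and this is automatic from the definition of $\cL$ as the language of the entire (closed) shift $X$.
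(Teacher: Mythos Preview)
Your proposal is correct and follows essentially the same approach as the paper: the proposition is stated after ``We summarize,'' and the paragraph preceding it already records the nested-interval argument (closures nested, lengths tending to zero by validity, hence a single intersection point) together with the observation that $x\in\Delta(d_1,\dots,d_n)$ for all $n$ when $\bd=\br(x)$. You simply make explicit the appeal to the preceding lemma and the nested interval theorem, which the paper leaves implicit.
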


\begin{lemma}\label{inforb}
If $O^+(x)$ is dense and 
$\Delta^\circ(d_1,d_2,\dots,d_n)\not=\emptyset$, then 
$\{N:F^N(x)\in\Delta^\circ(d_1,d_2,\dots,d_n)\}$ is infinite.
\end{lemma}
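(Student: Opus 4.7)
The plan is to adapt the argument of Lemma~\ref{infinite} to this setting; in fact, the proof of Lemma~\ref{infinite} did not actually use continuity of $F$, only that the ambient space is perfect metric, so the same idea works for a PIM $F$ on $[0,1)$.

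First I would fix $V := \Delta^\circ(d_1, d_2, \dots, d_n)$, which by hypothesis is a nonempty open interval in $[0,1)$. Suppose toward contradiction that the set $\cN := \{N \ge 0 : F^N(x) \in V\}$ is finite, say $\cN = \{N_1, N_2, \dots, N_k\}$ (possibly empty). Consider the punctured set
\[
V' \;=\; V \setminus \{F^{N_1}(x), F^{N_2}(x), \dots, F^{N_k}(x)\}.
\]
Since $[0,1)$ is perfect (it has no isolated points) and $V$ is a nontrivial open interval, removing a finite set of points from $V$ leaves a nonempty open subset of $[0,1)$.

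Next I would invoke the density of $O^+(x)$: since $V'$ is nonempty and open, there exists $M \ge 0$ with $F^M(x) \in V'$. But $V' \subseteq V$ forces $M \in \cN$, so $F^M(x)$ equals one of $F^{N_1}(x), \dots, F^{N_k}(x)$, contradicting $F^M(x) \in V'$. Hence $\cN$ must be infinite.

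I do not expect any serious obstacle here; the only point that warrants a moment of care is that although $F$ is merely piecewise continuous, the argument only needs density of $O^+(x)$ and the perfectness of $[0,1)$, both of which are in hand. One could also note that $V = \Delta^\circ(d_1, \dots, d_n)$ is automatically open in $[0,1)$ by condition (4) on the partition $\xi$, so no extra work is needed to justify that $V'$ is open.
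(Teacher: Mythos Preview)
Your proof is correct and considerably more elementary than the paper's. The paper argues differently: it finds the first visit $k_1$ to $\Delta^\circ(d_1\dots d_n)$, then invokes Theorem~\ref{npt} (TT implies $F$-representations are valid) to shrink $\Delta^m(F^{k_1}(x))$ until one can locate a sub-cylinder $\Delta^\circ(d_1\dots d_{m-1}e_m)\subseteq\Delta^\circ(d_1\dots d_n)$ not yet visited, and applies density again to get $k_2>k_1$. Your puncturing argument avoids this machinery entirely, using only that $[0,1)$ is perfect and $V$ is open; as you note, this is exactly the proof of Lemma~\ref{infinite}, whose continuity hypothesis is never used. The paper's route has the mild advantage of exhibiting an explicit unvisited sub-cylinder, but yours is shorter and does not depend on the validity result. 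One small slip: you refer to ``condition (4)'' on $\xi$, but the paper lists only (1)--(3); openness of $\Delta^\circ(d_1\dots d_n)$ is immediate from the definition.
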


\begin{proof}
Since $O^+(x)$ is dense and 
$\Delta^\circ(d_1d_2\dots d_n)$ is nonempty and open, 
there exists smallest $k_1\ge 0$ so that $F^{k_1}(x)\in \Delta^\circ(d_1d_2\dots d_n)$. We will show there exists $k_2>k_1$ so that
$F^{k_2}(x)\in \Delta^\circ(d_1d_2\dots d_n)$.

We know that $\br(F^{k_1}(x))_{[1,2,\dots,n]}=.d_1d_2\dots d_n$ and
$\Delta^\circ(d_1d_2\dots d_nd_{n+1}\dots d_m)\subseteq \Delta^m(F^{k1}(x))$ for all $m>n$. Since $O^+(x)$ is dense, 
$F$ satisfies TT, and thus Theorem~\ref{npt} implies 
$F$-representations are valid. This implies that 
$|\Delta^m(F^{k_1}(x))|\to 0$ as $m\to\infty$. It follows that for some 
$m>n$, which we choose as small as possible, 
$\Delta^\circ(d_1d_2\dots d_{m})$ is properly contained in
 $\Delta^\circ(d_1d_2\dots d_n)$., and 
$F^{k_1}(x)\in \Delta^\circ(d_1d_2\dots d_{m})$.
Then there exists $e_m\not=d_m$ so that 
$\Delta^\circ(d_1d_2\dots d_{m-1}e_m)\not=\emptyset$, 
$\Delta^\circ(d_1d_2\dots d_{m-1}e_m)\subseteq\Delta^\circ(d_1d_2\dots d_m)$ and $F^\ell(x)\not\in \Delta^\circ(d_1d_2\dots d_{m-1}e_m)$ for any $\ell=0,1,\dots, k_1$. Then 
there is a $k_2>k_1$ so that $F^{k_2}(x)\in \Delta^\circ(d_1d_2\dots d_{m-1}e_m)\subseteq\Delta^\circ(d_1d_2\dots d_n)$.
\end{proof}

\begin{prop}\label{istt}
If $F$ satisfies TT then so does the corresponding shift $X$, and $\widetilde X$ satisfies TTT.
\end{prop}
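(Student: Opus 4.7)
The plan is to show that for any $x$ with dense forward $F$-orbit in $[0,1)$, the sequence $\br(x)\in X$ has dense forward $T$-orbit in $X$; TTT for $\widetilde X$ will then follow at once from Corollary~\ref{eight}.

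First, I would record that if $O^+(x)$ is dense in the uncountable set $[0,1)$, then $O^+(x)$ is infinite, so $F^n(x)$ must be defined for every $n\ge 0$. This forces $x\in B$, so $\br(x)$ is a bona fide element of $\cD^{\BN}$, and it lies in $X$ by the very definition of the $F$-shift.

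Second, I would reduce density of $O^+(\br(x))$ in $X$ to hitting every cylinder $[w]$ with $w=d_1d_2\dots d_n\in\cL$, since such cylinders form a base for the topology on $X$. Unwinding the definitions, $T^N(\br(x))\in[w]$ if and only if $F^N(x)\in\Delta(d_1d_2\dots d_n)$. The lemma just before Proposition~\ref{istt} identifies $w\in\cL$ with $\Delta^\circ(d_1d_2\dots d_n)\ne\emptyset$, and then Lemma~\ref{inforb} supplies infinitely many such $N$. This shows $T\colon X\to X$ satisfies TT.

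Finally, Corollary~\ref{eight} immediately upgrades TT for $X$ to TTT for the 2-sided natural extension $\widetilde X$, completing the argument. The bulk of the technical work has already been done for us: Lemma~\ref{inforb} (which itself invokes validity of $F$-representations via Theorem~\ref{npt}) is exactly the bridge between ``$F^N(x)$ visits open intervals'' and ``$T^N(\br(x))$ visits cylinders.'' So I do not expect a serious obstacle; the only thing to keep track of is the correspondence between $F$-dynamics on $\Delta(w)$ and $T$-dynamics on $[w]$, which is direct.
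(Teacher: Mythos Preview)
Your proposal is correct and follows essentially the same approach as the paper: both arguments hinge on Lemma~\ref{inforb} together with the identification of $\cL$-words with nonempty $\Delta^\circ(d_1\dots d_n)$, and both finish the TTT claim for $\widetilde X$ via Corollary~\ref{eight}. The only cosmetic difference is that the paper verifies the language criterion (\ref{lang}) for TT, whereas you directly exhibit $\br(x)$ as a point with dense forward $T$-orbit; either route is immediate once Lemma~\ref{inforb} is in hand.
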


\begin{proof}
For $w_1=d_1d_2\dots d_m,w_2=e_1e_2\dots e_k\in\cL$,  
\[
\Delta^\circ(d_1,d_2,\dots, d_{m_1}),  \Delta^\circ(e_1e_2\dots e_{m_2})\not=\emptyset.\] Choose $x\in B$ so that 
$O^+(x)$ is dense. By Lemma~\ref{inforb}
there exist $k_2>k_1+m_1$ so that $F^{k_1}(x)\in
\Delta^\circ(d_1,d_2,\dots, d_{m_1})$ and $F^{k_2}(x)\in
\Delta^\circ(e_1e_2\dots e_{m_2})$.
Then $\br(x)_{[k_1,\dots,k_1+m_1-1]}=w_1$ and 
$\br(x)_{[k_1,\dots,k_2+m_2-1]}=w_2$. Thus 
$w_1uw_2\in \cL$. 
\end{proof} 

Fixing $d\in\cD$, let $\overline\Delta(d)=[a_d,b_d]$ and let 
$\alpha_d=\lim_{x\to a^+_d} F(x)$ and $\beta_d=\lim_{x\to b^-_d} F(x)$.
Define
$f_d:[0,1]\rightarrow[0,1]$ by
\begin{equation}\label{fddef}
f_d(x)=
\begin{cases}
a_d &\text{\ if\ } 0\le x< F(\alpha_d)\\
(F|_{\Delta(d)})^{-1}(x)& \text{\ if\ } F(\alpha_d) \le x< F(\beta_d)\\
\beta_d &\text{\ if\ } F(\beta)\le x<1\\
\end{cases}
\end{equation} 
Each $f_d$ is continuous because 
$F|_{\Delta(d)}:\Delta(d)\rightarrow[0,1)$ is continuous and monotonic.

\begin{lemma} If $d_1d_2\dots d_n\in\cL$ then 
\[\overline\Delta(d_1d_2\dots d_n)=f_{d_1}(f_{d_2}(\dots f_{d_n}([0,1])\dots)).\]
\end{lemma}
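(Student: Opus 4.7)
The proof will be by induction on $n$. The base case $n=1$ follows directly from definition~(\ref{fddef}): the middle branch of $f_{d_1}$ maps the range of $F|_{\Delta(d_1)}$ bijectively onto the open interval $(a_{d_1},b_{d_1})$, while the two outer (constant) branches contribute the endpoints $a_{d_1}$ and $b_{d_1}$, so $f_{d_1}([0,1])=[a_{d_1},b_{d_1}]=\overline\Delta(d_1)$.

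For the inductive step, assuming the identity for the suffix $d_2d_3\dots d_n$, I would reduce to showing
\[
\overline\Delta(d_1d_2\dots d_n)=f_{d_1}\bigl(\overline\Delta(d_2\dots d_n)\bigr),
\]
after which the induction hypothesis closes the argument. Starting from (\ref{induct}), $\Delta(d_1d_2\dots d_n)=\Delta(d_1)\cap F^{-1}\Delta(d_2\dots d_n)$, and the hypothesis $d_1d_2\dots d_n\in\cL$ ensures via the preceding lemma that this is a nondegenerate interval. Since $F|_{\Delta(d_1)}$ is continuous and strictly monotonic, it extends to a homeomorphism $F|_{\overline\Delta(d_1)}:\overline\Delta(d_1)\to [m,M]$, where $m=\min(\alpha_{d_1},\beta_{d_1})$ and $M=\max(\alpha_{d_1},\beta_{d_1})$. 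Taking closures of the intersection above yields
\[
\overline\Delta(d_1d_2\dots d_n)=\bigl(F|_{\overline\Delta(d_1)}\bigr)^{-1}\bigl(\overline\Delta(d_2\dots d_n)\cap[m,M]\bigr).
\]

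On the other hand, writing $\overline\Delta(d_2\dots d_n)=[p,q]$, definition~(\ref{fddef}) says that $f_{d_1}$ agrees with $(F|_{\overline\Delta(d_1)})^{-1}$ on $[m,M]$ and is constantly equal to whichever of $a_{d_1}$ or $b_{d_1}$ corresponds to the adjacent endpoint of $[m,M]$ on each of the complementary intervals. So I would run a case check on the monotonicity type of $F|_{\Delta(d_1)}$ (type A or B) and on whether $p<m$ and/or $q>M$; in every case the values contributed by the clamped branches coincide with the endpoints $(F|_{\overline\Delta(d_1)})^{-1}(m)$ or $(F|_{\overline\Delta(d_1)})^{-1}(M)$ already produced by the bordering portion of $[p,q]\cap[m,M]$, so nothing is added or lost. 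This gives $f_{d_1}([p,q])=(F|_{\overline\Delta(d_1)})^{-1}([p,q]\cap[m,M])$, matching the displayed expression above.

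The main obstacle is not a deep one but is essentially bookkeeping: tracking which of $a_{d_1},b_{d_1}$ is produced by which clamp under the four sign combinations (type A versus type B, protrusion of $[p,q]$ to the left of $m$ and/or to the right of $M$), and verifying in each case that the clamping mechanism in~(\ref{fddef}) is exactly calibrated to mimic the effect of first intersecting $\overline\Delta(d_2\dots d_n)$ with the range $[m,M]$ and then inverting. Once this case analysis is carried out, the inductive step closes and the lemma follows.
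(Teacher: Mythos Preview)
Your proposal is correct and follows essentially the same route as the paper: induction on $n$, with the inductive step reduced to the one-step identity $f_{d_1}\bigl(\overline\Delta(d_2\dots d_n)\bigr)=\overline\Delta(d_1d_2\dots d_n)$ via (\ref{induct}). The paper compresses your case analysis into the single assertion that (\ref{fddef}) gives $f_d([a',b'])=F^{-1}([a',b'])\cap\overline\Delta(d)$ for any interval $[a',b']$, which is exactly your identity $f_{d_1}([p,q])=(F|_{\overline\Delta(d_1)})^{-1}([p,q]\cap[m,M])$ rewritten; your version just spells out the bookkeeping that the paper leaves to the reader.
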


\begin{proof}
For $n=1$ we have $f_{d_1}([0,1])=[a_1,b_1]=\overline \Delta(d_1)$.
Suppose 
\[f_{d_2}(f_{d_3}(\dots f_{d_n}([0,1])\dots))=\overline \Delta(d_2d_3\dots d_n)
=[a',b'],\] where $b'>a'$. Note that $a'$ and $b'$ are  
$f_{d_2}(f_{d_3}
(\dots f_{d_n}(0)\dots))$ and 
$f_{d_2}(f_{d_3}
(\dots f_{d_n}(1)\dots))$ (in one order or the other). Then
\[
f_{d_1}(f_{d_2}(\dots f_{d_n}([0,1])\dots))=f_{d_1}(\overline \Delta(d_2d_3\dots d_n))
=f_{d_1}([a',b']).
\]
Now for any interval $[a',b']$, and any $d\in\cD$, (\ref{fddef}) implies that 
$f_d( [a',b'])=F^{-1}( [a',b'])\cap\overline\Delta(d)$.
The result now follows by (\ref{induct}). 
\end{proof}

\begin{thm}\label{tvalid}
Let $F$ be a PIM such that $F$-representations are valid.  
Then for Lebesgue almost every $x\in [0,1)$ (i.e., for $x\in B$)
\begin{equation}\label{valid}
x=E(\bd)=\lim_{n\to\infty} f_{d_0}(f_{d_1}(\dots f_{d_n}(0)\dots))=\lim_{n\to\infty} f_{d_0}(f_{d_1}(\dots f_{d_n}(1)\dots)),
\end{equation}
where $\bd=.d_0d_1d_2\dots=\br(x)$.  
\end{thm}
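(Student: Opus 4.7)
The plan is to derive the theorem directly from the preceding lemma and the proposition characterizing $E(\bd)$ as the unique intersection point of the nested closed fundamental intervals. No new dynamical input is needed; everything is already assembled.

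First I would fix $x \in B$ and set $\bd = .d_0 d_1 d_2 \dots = \br(x)$. By the very definition of the $F$-representation, $x \in \Delta(d_0 d_1 \cdots d_n) \subseteq \overline\Delta(d_0 d_1 \cdots d_n)$ for every $n$. Validity of $F$-representations is exactly the statement $\|\xi^{(n)}\| \to 0$, which forces $|\overline\Delta(d_0 d_1 \cdots d_n)| \to 0$. Together with the nesting $\overline\Delta(d_0 \cdots d_{n+1}) \subseteq \overline\Delta(d_0 \cdots d_n)$, the preceding proposition gives
\[
\bigcap_{n \ge 0} \overline\Delta(d_0 d_1 \cdots d_n) = \{E(\bd)\} = \{x\}.
\]

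Next I would apply the lemma to rewrite
\[
\overline\Delta(d_0 d_1 \cdots d_n) = f_{d_0}\bigl(f_{d_1}\bigl(\cdots f_{d_n}([0,1])\cdots\bigr)\bigr).
\]
Since $0, 1 \in [0,1]$, the two points
\[
p_n := f_{d_0}\bigl(f_{d_1}(\cdots f_{d_n}(0)\cdots)\bigr), \qquad q_n := f_{d_0}\bigl(f_{d_1}(\cdots f_{d_n}(1)\cdots)\bigr)
\]
both lie in $\overline\Delta(d_0 d_1 \cdots d_n)$, as does $x$ itself. Because the diameter of this interval tends to zero, the squeeze principle forces $p_n \to x$ and $q_n \to x$, which is the conclusion (\ref{valid}).

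I do not anticipate a genuine obstacle; the substantive work was carried out in the preceding lemma and proposition. The only care needed is to verify that $p_n$ and $q_n$ really sit inside $\overline\Delta(d_0 d_1 \cdots d_n)$ at every stage, which follows from each $f_d$ mapping $[0,1]$ into $\overline\Delta(d)$ together with the iterated-image identity. In fact, by monotonicity of each $f_{d_i}$ on the relevant subinterval, $p_n$ and $q_n$ are the two endpoints $a_n, b_n$ of $\overline\Delta(d_0 d_1 \cdots d_n)$ (in one order or the other, depending on the parity of the number of type-B branches among $d_0, \dots, d_n$); this matches the expression $E(\bd) = \lim_n a_n = \lim_n b_n$ from the previous proposition and shows that (\ref{valid}) is simply a restatement of that identity in terms of the maps $f_d$.
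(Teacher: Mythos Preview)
Your proof is correct and is exactly the argument the paper has in mind: the paper states Theorem~\ref{tvalid} without a separate proof, having set it up to follow immediately from the preceding proposition (which gives $E(\br(x))=x=\lim_n a_n=\lim_n b_n$ for $x\in B$) and the preceding lemma (which identifies $\overline\Delta(d_0d_1\dots d_n)$ with $f_{d_0}(f_{d_1}(\dots f_{d_n}([0,1])\dots))$). Your observation that $p_n$ and $q_n$ are precisely the endpoints $a_n$ and $b_n$ (in an order depending on the parity of type-B branches) makes explicit what the paper leaves implicit.
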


For $.d_1d_2d_3\dots\in\cD^\BN$ we call the limits of the type (\ref{valid}) 
{\em generalized $f$-expansions}.  The conclusion of Theorem~\ref{valid}
can be expressed by saying if $F$-expansions are valid, then a.e. $f$-expansion 
converges to ``what it should''. 
This occurs 
whenever $F$ satisfies either TT or PTT. 

Traditionally, additional assumptions on $F$  allow the limits in (\ref{valid}) to be 
expressed in a simpler form. These assumptions, which we try and state here 
fairly generally,  involve a more stringent order relations on the digit set $\cD$.
We say $F$ {\em well ordered} if $\cD\subseteq\BZ$ and $\Delta(d)<\Delta(e)$ if and only if 
$d<e$ (one may need to relabel $\cD$ to make this happen). An example of $F$ that is not well 
ordered is the Cantor transformation in Example~\ref{cantor}.
If $F$ is well-ordered, we
define $f:\BR\rightarrow[0,1)$ by $f(x)=f_d(x-d)$ if $x\in[d,d+1)$
for each $d\in\cD$. 
We extend $f$ to a complete the definition 
of $f$ to a function $f:\BR\to [0,1]$
by defining $f(x)=f(a)$ for all $x<a$, where 
$\Delta(d)=[a,b)$ is the left most fundamental interval, and $f(x)=f(b)$ 
if $[a,b)$ is the first fundamental interval smaller than $x$. 
This is most natural if $F$ is either type A or type B, in which case $f$ is 
continuous, and either increasing or decreasing (not necessarily strictly), 
respectively. 

If we restrict the function $f$, as defined above, to the intervals in $\BR$ on 
which it is strictly monotonic, then $f^{-1}$ exists, and we have 
\[
F(x)=f^{-1}(x)\text{\ mod\ }1.
\]
This is a traditional starting point for the theory (see \cite{Kakeya}, \cite{parry2})
Equivalently, 
we can view $f$ as the inverse of the function $F(x)+\xi(x)$.

Given 
$.d_1d_2d_3\dots\in\cD^\BN$ we define the (classical) $f$-expansion by
\[
f(d_1+f(d_2+f(d_3+\dots))).
\]
In particular, we understand this expression this to be the limit 
\[
\lim_{n\to\infty} f(d_1+f(d_2+f(d_3+\dots f(d_n)\dots)))
\]

\begin{thm}
Suppose $F$ is a well ordered PIM such that $F$-representations are valid (i.e., 
if $F$ satisfies either TT or PTT). Then $f$-expansions are {\em valid} in the sense that 
for 
$\lambda$ a.e $x\in [0,1)$ (i.e., for $x\in B$), $\br(x)=.d_1d_2d_3\dots\in\cD^\BN$ and 
\[
x=f(d_1+f(d_2+f(d_3+\dots))).
\]
We also have $\lim_{n\to\infty} f(d_1+f(d_2+f(d_3+\dots f(d_n+1)\dots)))$
\end{thm}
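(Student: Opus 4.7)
My plan is to reduce the classical $f$-expansion to the generalized $f$-expansion already handled in Theorem~\ref{tvalid}, using the identity $f(d + y) = f_d(y)$ that comes directly from the definition of $f$ on $[d, d+1)$ for each $d \in \cD$ (together with the boundary extension) in the well-ordered setting.

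First I would establish, by induction on $n \ge 1$, the identity
\begin{equation*}
f(d_1 + f(d_2 + \cdots + f(d_n + c) \cdots)) = f_{d_1}(f_{d_2}(\cdots f_{d_n}(c) \cdots))
\end{equation*}
for $c \in \{0, 1\}$. The base case $n = 1$ is just $f(d_1 + c) = f_{d_1}(c)$: for $c = 0$ this is immediate from the definition $f|_{[d_1, d_1+1)} = f_{d_1}(\,\cdot - d_1)$, and for $c = 1$ it holds because $f$ is extended so that $f(d_1 + 1) = f_{d_1}(1)$. For the inductive step, the inner expression $y_n := f_{d_2}(\cdots f_{d_n}(c) \cdots)$ lies in $[0, 1]$, since each $f_{d_i}$ has range in $[a_{d_i}, b_{d_i}] \subseteq [0, 1]$; hence $d_1 + y_n \in [d_1, d_1 + 1]$ and the same identity $f(d_1 + y_n) = f_{d_1}(y_n)$ applies, with the inner expression already rewritten by the inductive hypothesis.

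Next, taking $c = 0$ and letting $n \to \infty$, Theorem~\ref{tvalid} says the right side $f_{d_1}(f_{d_2}(\cdots f_{d_n}(0) \cdots))$ converges to $E(\br(x)) = x$ for $\lambda$-almost every $x \in B$, which yields the first claim. Taking instead $c = 1$ and invoking the second limit in Theorem~\ref{tvalid} (with $a_n$ replaced by $b_n$) gives the parallel statement with $f(d_n)$ replaced by $f(d_n + 1)$.

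The main delicate point is the boundary behavior $f(d + 1)$, which sits exactly on the seam between the definition on $[d, d+1)$ and the definition on $[d+1, d+2)$. In the type A case $f$ is continuous and nondecreasing on $\BR$, and well-ordering gives $f(d + 1) = \lim_{y \to 1^-} f_d(y) = f_d(1) = b_d = a_{d+1} = f_{d+1}(0)$, so the induction goes through unobstructed; the type B case is symmetric, and for well-ordered mixed-type $F$ the paper's extension convention forces $f(d + 1) = f_d(1)$ by fiat. Once this reconciliation of the boundary value is in hand, Theorem~\ref{tvalid} supplies the convergence and the proof is complete.
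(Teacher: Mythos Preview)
The paper states this theorem without proof; it follows immediately after the definition of the classical $f$-expansion and the next section begins with no intervening argument. Your proposal supplies exactly the argument the paper leaves implicit: the definition $f(x)=f_d(x-d)$ for $x\in[d,d+1)$ gives $f(d+y)=f_d(y)$ for $y\in[0,1)$, and the boundary extension handles $y=1$, so an easy induction identifies the classical nested expression with the generalized one $f_{d_1}(f_{d_2}(\cdots f_{d_n}(c)\cdots))$, whereupon Theorem~\ref{tvalid} finishes the job. This is precisely the reduction the paper's setup is designed to enable, and your treatment of the seam value $f(d+1)$ is the only point requiring care; your case analysis there is correct.
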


\section{Topological transitivity implies Parry topological transitivity}

We can now prove our main result.

\begin{thm}\label{mainresult}
If $F$ is a piecewise interval map (PIM) that that satisfies TT, 
then it satisfies PTT. 
\end{thm}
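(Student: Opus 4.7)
The plan is to chain three results already proved in the paper and then transfer density through the decoding map. By Theorem~\ref{npt}, TT of $F$ forces $F$-representations to be valid, so $\|\xi^{(n)}\|\to 0$ and the decoding map $E:X\to[0,1]$ defined by $E(\bd)=\bigcap_n\overline\Delta(d_1\cdots d_n)$ is well-defined and (from the product topology and the shrinking of cylinders) continuous. By Proposition~\ref{istt}, TT of $F$ lifts to TT of the $F$-shift $X$, and by Corollary~\ref{nine} this upgrades to PTT of $X$, yielding some $\bd\in X$ with $O^-_T(\bd)$ dense in $X$. The natural candidate in $[0,1)$ is $y=E(\bd)$, and I would aim to prove $O^-_F(y)$ is dense in $[0,1)$. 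A useful preliminary is that $X$ is perfect: an isolated cylinder $[w]\cap X=\{\bd_0\}$ would force the nontrivial interval $\Delta(w)\cap B$ to be collapsed by $\br|_B$ to a single sequence, contradicting validity.

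Given a nonempty open $U\subseteq[0,1)$, I would pick $x\in U\cap B$ and use $\|\xi^{(n)}\|\to 0$ to choose $k$ large enough that $\overline\Delta(v)\subseteq U$ for $v=\br(x)_{[1,k]}$. The cylinder $[v]\subseteq X$ is open and nonempty, and density of $O^-_T(\bd)$ together with perfectness of $X$ produces infinitely many $\be=.e_1 e_2\cdots e_n\bd\in[v]\cap O^-_T(\bd)$ with $T^n\be=\bd$. For each such $\be$, the composition lemma of Section~\ref{fexpg} gives $E(\be)=f_{e_1}(f_{e_2}(\cdots f_{e_n}(y)\cdots))\in\overline\Delta(e_1\cdots e_n)\subseteq\overline\Delta(v)\subseteq U$. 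Provided $E(\be)$ lies in $B_n=\bigcap_{k=0}^{n-1}F^{-k}(D)$, the semi-conjugacy $F\circ E=E\circ T$ (valid wherever $E(\cdot)\in D$) iterates to $F^n(E(\be))=E(T^n\be)=E(\bd)=y$, placing $E(\be)\in O^-_F(y)\cap U$ as required.

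The main obstacle is thus to arrange $E(\be)\in B$, since a priori $E(\be)$ could land on a partition boundary where $F$ is undefined and the semi-conjugacy breaks. To handle this I would observe that $E^{-1}(B^c)=\bigcup_{z\in B^c}E^{-1}(\{z\})$ is meager in $X$: each fiber $E^{-1}(\{z\})$ is closed and has empty interior, because any cylinder $[w]$ contains the dense set of $\br$-representations of $\Delta^\circ(w)\cap B$, which $E$ decodes injectively back to distinct points of $B$. Hence $\br(B)\subseteq E^{-1}(B)$ is comeager. In the compact case $\#\cD<\infty$, Theorem~\ref{ttiptt} applied to the continuous shift $T$ on the perfect compact metric space $X$ makes $\{\bd:O^-_T(\bd)\text{ dense}\}$ a dense $G_\delta$, which by Baire meets the comeager $\br(B)$; choosing $\bd\in\br(B)$ with dense backward orbit gives $y=E(\bd)\in B$, and a parallel refinement inside $[v]$ produces $\be\in\br(B)\cap O^-_T(\bd)\cap[v]$ with $E(\be)\in B$, completing the argument. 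The hardest step is carrying out this refinement uniformly in the non-compact case $\#\cD=\infty$, where $X$ is merely Polish and one must refine the explicit interleaving construction behind Corollary~\ref{nine} to ensure the candidate $\bd$ (and enough of its backward orbit) land in $\br(B)$.
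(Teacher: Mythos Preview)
Your route is the paper's route: lift TT of $F$ to TT of the $F$-shift $X$ via Proposition~\ref{istt}, obtain a dense backward $T$-orbit there (you cite Corollary~\ref{nine}; the paper goes through the two-sided extension $\widetilde X$, which amounts to the same thing), and then decode via $E$ back to $[0,1)$. You also correctly isolate a subtlety the paper's own proof passes over in silence: the relation $F\circ E=E\circ T$ can fail when $E(\be)$ lands on a partition endpoint, so $\be\in O^-_T(\bd)$ does not automatically yield $E(\be)\in O^-_F(E(\bd))$. The paper simply asserts ``It follows that $F^n(x_n)=x$'' at exactly this step.

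Your proposed repair, however, has its own gap. The Baire argument lets you pick $\bd$ in the comeager set $\br(B)$ with $O^-_T(\bd)$ dense, but it gives you no control over the \emph{countable} set $O^-_T(\bd)$: you then need, for each cylinder $[v]$, some $\be\in O^-_T(\bd)\cap[v]$ with $E(\be)$ in the domain of $F^n$, and the phrase ``a parallel refinement inside $[v]$'' does not deliver this, since a countable dense set need not meet a prescribed comeager set. A fix that actually works (already in the finite-alphabet case, not only for $\#\cD=\infty$) is to note that for each word $u\in\cL$ there is exactly one sequence in $[u]$ decoding to each endpoint of $\overline\Delta(u)$; shifting off the prefix $u$, this produces only countably many ``forbidden'' tails, and one must exclude these when choosing $\bd$---i.e., the avoidance has to be built into the selection of $\bd$ from the start, not recovered afterward by intersecting a fixed countable orbit with a comeager set.
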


\begin{proof}
Since $F$ satisfies TT,  Proposition~\ref{istt} implies that the 
$2$-sided $F$-shift $\widetilde X$ satisfies TTT. 
Let $\tilde\bd\in \widetilde X$ be such that $O^-(\tilde\bd)$ is 
dense. Let $\bd_n={\widetilde T}^{-n}(\tilde\bd)$, $n\ge 0$, and for each $n$ 
let $\bd_n=\pi_+(\tilde\bd_n)$, where $\pi_+:\widetilde X\to X$, defined
$\pi_+(\dots d_{-1}d_0.d_1d_2\dots)=.d_1d_2\dots$, is the factor map from the 
2-sided to 1-sided shift. Note that 
$\pi_+(\widetilde T(\tilde\bd))=T(\pi_+(\tilde\bd))$, so 
we have 
\[
T^n(\bd_n)=T^n(\pi_+(\tilde \bd_n))=\pi_+(\widetilde T^n(\tilde\bd_n))=\pi_+(\tilde \bd)=\bd.
\]
Let $x_n=E(\bd_n)$, which exists by Theorem~\ref{npt} and Theorem~\ref{tvalid}.
It follows that $F^n(x_n)=x$ so $B=\{x_0,x_1,x_2,\dots\}$ is a backward orbit for $F$ 
and it suffices to show $B$ is dense. But 
$(\widetilde T^{-n}(\tilde \bd))|_{[1,2,\dots,m]}=d_1d_2\dots d_m$
implies $x_n\in\overline\Delta(d_1d_2\dots d_m)$.  
Since $O^-(\tilde\bd)$ is 
dense, $B$ is dense too, and so $F$ satisfies PTT. 
\end{proof}

\bibliographystyle{plain}
%\bibliography{Parry}

\end{document}